\documentclass[11pt]{amsart}
\usepackage{geometry}                
\geometry{margin=2cm}                   
\usepackage[utf8]{inputenc} 
\usepackage[T1]{fontenc}    
\usepackage{nicefrac}       
\usepackage{amsmath}
\usepackage{amsfonts}
\usepackage{amssymb}

\usepackage{enumitem}
\usepackage{multirow}
\usepackage{algorithm}
\usepackage{algorithmic}
\usepackage{amsthm}
\usepackage{bm}
\usepackage{enumerate}
\usepackage{bbm}
\usepackage[table]{xcolor}
\usepackage{tikz} 
\usetikzlibrary{arrows,automata}
\usepackage{setspace} 
\usepackage{framed}
\usepackage{verbatim}
\usepackage{wrapfig}
\usepackage{caption}
\usepackage{subcaption}
\usepackage{stackrel}
\usepackage{xfrac}

\makeatletter
\setlength{\@fptop}{0pt}  
\makeatother

\usepackage{graphicx}
\graphicspath{{images/}}

\usepackage{booktabs}
\usepackage{lipsum}
\usepackage{microtype}
\usepackage{url}
\usepackage[final]{pdfpages}

\DeclareMathOperator*{\argmin}{arg\,min}



\def \dim {n}




\newcommand{\R}{\mathbb{R}}


\newtheorem{definition}{Definition}
\newtheorem{lemma}{Lemma}
\newtheorem{theorem}{Theorem}

\newcommand{\vectornorm}[1]{\|#1\|}


\usepackage{epstopdf}

\title[Provable convergence for accelerated IHT]{IHT \MakeLowercase{dies hard:} P\MakeLowercase{rovable convergence for accelerated} I\MakeLowercase{terative} H\MakeLowercase{ard} T\MakeLowercase{hresholding}}
\author[Khanna, Kyrillidis]{R\MakeLowercase{ajiv} K\MakeLowercase{hanna}$^\dagger$ \MakeLowercase{and} A\MakeLowercase{nastasios} K\MakeLowercase{yrillidis}$^\star$   \\ $^\dagger$U\MakeLowercase{niversity of }T\MakeLowercase{exas at }A\MakeLowercase{ustin} \\ $^\star$IBM T.J. W\MakeLowercase{atson} R\MakeLowercase{esearch} C\MakeLowercase{enter}}

\begin{document}

\maketitle

\begin{abstract}
We study --both in theory and practice-- the use of momentum in classic iterative hard thresholding (IHT) methods. 
By simply modifying classical IHT, we investigate its convergence behavior on convex optimization criteria with non-convex constraints, under standard assumptions.
We observe that acceleration in IHT leads to significant improvements, compared to state of the art projected gradient descent and Frank-Wolfe variants.
As a byproduct of our inspection, we study the impact of selecting the momentum parameter: similar to convex settings, two modes of behavior are observed --``rippling'' and linear-- depending on the level of momentum.
\end{abstract}

\begin{center}
\textit{(Improved version based on comments from peer-review process; removed assumptions on optimal point; noisy case included)}
\end{center}


\section{Introduction}

It is a well-known fact in convex optimization that momentum techniques provably result into significant gains w.r.t. convergence rate.
Since 1983, when Nesterov proposed his \emph{optimal gradient methods} \cite{nesterov1983method}, these techniques have been used in diverse machine learning and signal processing tasks.
Lately, the use of momentum has re-gained popularity in non-convex settings, thanks to their improved performance in structured practical scenaria: from empirical risk minimization (ERM) to training neural networks.

Here, we mainly focus on structured constrained ERM optimization problems: 
\begin{equation}
	\begin{aligned}
	& \underset{x\in\R^\dim}{\text{minimize}}
	& & f(x) \left(:= \tfrac{1}{2} \|b - \Phi x\|_2^2 \right)
	& \text{subject to}
	& & x \in \mathcal{C},
	\end{aligned} \label{eq:generic_problem} 
\end{equation} 
that involve convex objectives $f$ and simple structured, but non-convex, constraints $\mathcal{C}$, that can be described using a set of atoms, as in \cite{negahban2009unified, chandrasekaran2012convex}; see also Section \ref{subsec:model}.
We note that our theory goes beyond the vector least squares case, with applications also in low-rank recovery.

Practical algorithms for \eqref{eq:generic_problem} are convexified projected gradient descent schemes \cite{chandrasekaran2012convex}, non-convex iterative hard thresholding (IHT) variants \cite{bahmani2013greedy} and Frank-Wolfe (FW) methods \cite{clarkson2010coresets}. 
Convex methods can accommodate acceleration due to \cite{nesterov1983method, nesterov2013gradient} and come with rigorous theoretical guarantees; but, higher computational complexity might be observed in practice (depending on the nature of $\mathcal{C}$);
further, their configuration could be harder and/or non-intuitive.
FW variants \cite{jaggi2013revisiting, lacoste2015global} simplify the handling of constraints, but the successive construction of estimates --by adding singleton atoms to the putative solution-- could slow down convergence. 
Non-convex methods, such as IHT \cite{blumensath2009iterative, jain2010guaranteed}, could be the methods of choice in practice, but only few schemes justify their behavior in theory. 
Even more importantly, IHT schemes that utilize acceleration inferably are lacking.
We defer the discussion on related work to Section \ref{sec:related}.

In this work, we study the use of acceleration in IHT settings and supply additional information about open questions regarding the convergence and practicality of such methods on real problems.
The current paper provides evidence that ``IHT dies hard'':
\begin{itemize}[leftmargin=0.5cm]
\item Accelerated IHT comes with 
theoretical guarantees for the minimization problem \eqref{eq:generic_problem}. 
While recent results \cite{barber2017gradient} focus on plain IHT, there are no results on Accelerated IHT, apart from \cite{kyrillidis2014matrix}. 
The main assumptions made here are the existence of an exact projection operation over the structure set $\mathcal{C}$, as well as standard regularity conditions on the objective function. 
\item Regarding the effect of the momentum on the convergence behavior, our study justifies that similar --to convex settings-- behavior is observed in practice for accelerated IHT: two modes of convergence exist (``rippling'' and linear), depending on the level of momentum used per iteration.
\item We include extensive experimental results with real datasets and highlight the pros and cons of using IHT variants over state of the art for structured ERM problems.
\end{itemize}

%

Our framework applies in numerous structured applications, and one of its primary merits is its flexibility.



%


\section{Problem statement}

\subsection{Low-dimensional structures}{\label{subsec:model}}

Following the notation in \cite{chandrasekaran2012convex}, let $\mathcal{A}$ denote a set of atoms; \emph{i.e.}, simple building units of general ``signals''.
\emph{E.g.}, we write $x \in \R^n$  as $x = \sum_{i} w_i a_i$, where $w_i$ are weights and $a_i \in \R^\dim$ atoms from $\mathcal{A}$.

Given $\mathcal{A}$, let the ``norm'' function $\|x\|_{0, \mathcal{A}}$ \emph{return the minimum number of superposed atoms that result into $x$}. 
Note that $\|\cdot\|_{0, \mathcal{A}}$ is a non-convex entity for the most interesting $\mathcal{A}$ cases.
Also, define the support function $\texttt{supp}_{\mathcal{A}}(x)$ as the function that returns the indices of active atoms in $x$.
Associated with $\|\cdot\|_{0, \mathcal{A}}$ is the projection operation over the set $\mathcal{A}$:
\begin{align*}
\Pi_{k, \mathcal{A}}(x) \in \argmin_{y: \|y\|_{0, \mathcal{A}} \leq k} \tfrac{1}{2} \|x - y\|_2^2.
\end{align*}
To motivate our discussion, we summarize some well-known sets $\mathcal{A}$ used in machine learning problems; for a more complete description see \cite{bach2012structured}.

\emph{$\mathcal{A}$ represents plain sparsity:} 
Let $\mathcal{A} = \{ a_i \in \R^\dim ~|~ a_i \equiv \pm e_i, \forall i \in [\dim] \}$, where $e_i$ denotes the canonical basis vector. 
In this case, $k$-sparse ``signals'' $x \in \R^\dim$ can be represented as a linear combination of $k$ atoms in $\mathcal{A}$: $x = \sum_{i \in \mathcal{I}} w_i a_i$, for $|\mathcal{I}| \leq k$ and $w_i \in \R_{+}$.
The ``norm'' function is the standard $\ell_0$-``norm'' and 
$\Pi_{k, \mathcal{A}}(x)$ finds the $k$-largest in magnitude entries of $x$.

\emph{$\mathcal{A}$ represents block sparsity \cite{jain2016structured}:} 
Let $\{G_1, G_2, \dots, G_M\}$ be a collection of $M$ non-overlapping group indices such that $\cup_{i = 1}^M G_i = [\dim]$. 
With a slight abuse of notation, $\mathcal{A} = \left\{ a_i \in \R^\dim ~|~ a_i \equiv \cup_{j: j \in G_i}e_j \right\}$ is the collection of grouped indices, according to $\{G_1, G_2, \dots, G_M\}$.
Then, $k$-sparse block ``signals'' $x \in \R^\dim$ can be expressed as a weighted linear combination of $k$ group atoms in $\mathcal{A}$. 
The ``norm'' function is the extension of $\ell_0$-``norm'' over group structures, and 
$\Pi_{k, \mathcal{A}}(x)$ finds the $k$ most significant groups (\emph{i.e.}, groups with largest energy).

\emph{$\mathcal{A}$ denotes low rankness:} Let $\mathcal{A} = \{ a_i \in \R^{m \times n}~|~ a_i = u_i v_i^\top, \|u_i\|_2 = \|v_i\|_2 = 1 \}$ be the set of rank-one matrices. 
Here, sparsity corresponds to low-rankness.
The ``norm'' function corresponds to the notion of rankness;
$\Pi_{k, \mathcal{A}}(x)$ finds the best $k$-rank approximation.

\subsection{Loss function $f$}

Let $f: \R^\dim \rightarrow \R$ be a differentiable convex loss function. 
We consider applications that can be described by \emph{restricted strongly convex and smooth} functions $f$. 

\begin{definition}{\label{def_00}}
Let $f$ be convex and differentiable. $f$ is $\alpha$-restricted strongly convex over $\mathcal{C} \subseteq \R^n$  if: 
\begin{small}
\begin{equation}\label{eq:sc}
f(y) \geq f(x) + \left \langle \nabla f(x),~ y - x \right \rangle + \tfrac{\alpha}{2} \|x - y\|_2^2, ~\forall x, y \in  \mathcal{C}.
\end{equation}
\end{small}
\end{definition}

\begin{definition}{\label{def_01}}
Let $f$ be a convex and differentiable. $f$ is $\beta$-restricted smooth over $\mathcal{C} \subseteq \R^n$ if: 
\begin{small}
\begin{equation}
f(y) \leq f(x) + \left \langle \nabla f(x),~ y - x \right \rangle + \tfrac{\beta}{2} \|x - y\|_2^2, ~\forall x, y \in  \mathcal{C}.
\end{equation} 
\end{small}
\end{definition} 

Combined with the above, $\mathcal{C}$ could be the set of $rk$-sparse vectors, $rk$-sparse block ``signals'', etc, for some integer $r > 0$.
In this case, and for our theory, Definitions \ref{def_00} and \ref{def_01} simplify to the generalized RIP definition for least-squares objectives $f(x) = \tfrac{1}{2} \|b - \Phi x\|_2^2$:
\begin{align*}
\alpha_{rk} \|x\|_2^2 \leq \|\Phi x\|_2^2 \leq \beta_{rk} \|x\|_2^2
\end{align*}
where $\alpha_{rk}$ is related to the restricted strong convexity parameter, and $\beta_{rk}$ to that of restricted smoothness.

\subsection{Optimization criterion}
Given $f$ and a low-dimensional structure $\mathcal{A}$, we focus on the following optimization problem:
\begin{equation}
	\begin{aligned}
	& \underset{x\in\R^\dim}{\text{minimize}}
	& & f(x)
	& \text{subject to}
	& & \|x\|_{0, \mathcal{A}} \leq k.
	\end{aligned} \label{eq:problem}
\end{equation} 
Here, $k \in \mathbb{Z}_{+}$ denotes the level of ``succinctness''.
Examples include $(i)$ sparse and model-based sparse linear regression which is the case our theory is based on, $(ii)$ low-rank learning problems, and $(iii)$ model-based, $\ell_2$-norm regularized logistic regression tasks; see also Section \ref{sec:experiments}.


\section{Accelerated IHT variant}{\label{sec:algo}}

We follow the path of IHT methods. 
These are first-order gradient methods, that perform per-iteration a non-convex projection over the constraint set $\mathcal{A}$.
With math terms, this leads to:
\begin{align*}
x_{i+1} = \Pi_{k, \mathcal{A}} \left(x_i - \mu_i \nabla f(x_i) \right),~\text{where } \mu_i \in \R.
\end{align*} 

While the merits of plain IHT, as described above, are widely known for simple sets $\mathcal{A}$ and specific functions $f$ (\emph{cf.}, \cite{blumensath2009iterative, bahmani2013greedy, jain2014iterative, barber2017gradient}), momentum-based acceleration techniques in IHT have not received significant attention in more generic ML settings.
Here, we study a simple momentum-variant of IHT, previously proposed in \cite{kyrillidis2011recipes, kyrillidis2014matrix}, that satisfies the following recursions:
\begin{align}
x_{i+1} = \Pi_{k, \mathcal{A}} \left(u_i - \mu_i \nabla_{\mathcal{T}_i} f(u_i) \right), 
\nonumber
\end{align}
and 
\begin{align}
u_{i+1} = x_{i+1} + \tau \cdot (x_{i+1} - x_i). \label{eq:main_rec}
\end{align} 
Here, $\nabla_{\mathcal{T}_i} f(\cdot)$ denotes restriction of the gradient on the subspace spanned by set $\mathcal{T}$; more details below.
$\tau$ is the momentum step size, used to properly weight previous estimates with the current one, based on \cite{nesterov2013introductory}.\footnote{Nesterov's acceleration is an improved version of Polyak's classical momentum \cite{polyak1964some} schemes. Understanding when and how hard thresholding operations still work for the whole family of momentum algorithms is open for future research direction.}
Despite the simplicity of \eqref{eq:main_rec}, to the best of our knowledge, there are no convergence guarantees for generic $f$, neither any characterization of its performance w.r.t. $\tau$ values. 
Nevertheless, its superior performance has been observed under various settings and configurations \cite{kyrillidis2011recipes, blumensath2012accelerated, kyrillidis2014matrix}.

\emph{In this paper, we study this accelerated IHT variant, as described in Algorithm \ref{algo:alps}.}
This algorithm was originally presented in \cite{kyrillidis2011recipes, kyrillidis2014matrix}. 
For simplicity, we will focus on the case of sparsity; same notions can be extended to more complicated sets $\mathcal{A}$.

Some notation first: given gradient $\nabla f(x) \in \R^\dim$, and given a subset of $[\dim]$, say $\mathcal{T} \subseteq [\dim]$, $\nabla_{\mathcal{T}} f(x) \in \R^\dim$ has entries from $\nabla f(x)$, only indexed by $\mathcal{T}$.\footnote{Here, we abuse a bit the notation for the case of low rank structure $\mathcal{A}$: in that case $\nabla_{\mathcal{T}} f(x) \in \R^{m \times n}$ denotes the part of $\nabla f(x)$ that ``lives'' in the subspace spanned by the atoms in $\mathcal{T}$.}
$\mathcal{T}^c$ represents the complement of $[\dim] \setminus \mathcal{T}$.

\begin{algorithm}[!ht]
   \caption{Accelerated IHT algorithm}\label{algo:alps}
\begin{algorithmic}[1]
   \STATE {\bfseries Input:} Tolerance $ \eta $, $T$, $\kappa > 1$, $\mu_i > 0$, model $\mathcal{A}$, $k \in \mathbb{Z}_+$. 
   \STATE {\bfseries Initialize:} $ x_0, u_0 \leftarrow 0$, $ \mathcal{U}_0 \leftarrow \lbrace \emptyset \rbrace $. Set $\xi = \tfrac{2(\kappa - 1)}{\kappa + 1}$; select $\tau$ s.t. $|\tau| \leq \tfrac{1 - \varphi \xi^{1/2}}{\varphi \xi^{1/2}}$, where $\varphi = \tfrac{1 + \sqrt{5}}{2}$.
   \STATE {\bfseries repeat} 
   \STATE \hspace{0.16cm} $ \mathcal{T}_i \leftarrow \texttt{supp}_{\mathcal{A}}\left(\Pi_{k, \mathcal{A}} \left(\nabla_{\mathcal{U}_i^c} f(u_i) \right)\right) \cup \mathcal{U}_i$ 
   \STATE \hspace{0.16cm} $ \bar{u}_i = u_i  - \mu_i \nabla_{\mathcal{T}_i} f(u_i) $ 
   \STATE \hspace{0.16cm} $ x_{i+1} = \Pi_{k, \mathcal{A}}\left(\bar{u}_i\right)^\dagger $ 
   \STATE \hspace{0.16cm} $ u_{i+1} = x_{i+1} + \tau \left(x_{i+1} - x_i\right)$ where $ \mathcal{U}_{i+1} \leftarrow \texttt{supp}_{\mathcal{A}}(u_{i+1}) $ 
   \STATE {\bfseries until} $\vectornorm{x_i - x_{i-1}} \leq \eta \vectornorm{x_i} $ or after $T$ iterations. 
   \STATE $^\dagger$\textit{Optional}: Debias step on $x_{i+1}$, restricted on the support $\texttt{supp}_{\mathcal{A}}(x_{i+1}) $.
\end{algorithmic}
\end{algorithm}

Algorithm \ref{algo:alps} maintains and updates an estimate of the optimum at every iteration. 
It does so by maintaining two sequences of variables: $x_i$'s that represent our putative estimates per iteration, and $u_i$'s that model the effect of ``friction'' (memory) in the iterates.
The first step in each iteration is \emph{active support expansion}: we expand support set $\mathcal{U}_i$ of $u_i$, by finding the indices of $k$ atoms of the largest entries in the gradient in the complement of $\mathcal{U}_i$.
This step results into set $\mathcal{T}_i$ and makes sure that per iteration we enrich the active support by ``exploring'' enough outside of it.
The following two steps perform the recursion in \eqref{eq:main_rec}, restricted on $\mathcal{T}_i$; \emph{i.e.}, we perform a gradient step, followed by a projection onto $\mathcal{A}$; finally, we update the auxiliary sequence $u$ by using previous estimates as momentum.
The iterations terminate once certain condition holds.

Some observations: 
Set $\mathcal{T}_i$ has cardinality at most $3k$;
$x_i$ estimates are always $k$-sparse;
intermediate ``signal'' $u_i$ has cardinality at most $2k$, as the superposition of two $k$-sparse ``signals''.


\section{Theoretical study}{\label{sec:theory}}
Our study\footnote{Our focus is to study optimization guarantees (convergence), not statistical ones (required number of measurements, etc). Our aim is the investigation of accelerated IHT and under which conditions it leads to convergence; not its one-to-one comparison with plain IHT schemes.} starts with the description of the dynamics involved per iteration (Lemma \ref{thm:iter_inv}), followed by the conditions and eligible parameters that lead to convergence.
Proofs are deferred to the Appendix.

\begin{lemma}[Iteration invariant]{\label{thm:iter_inv}}
Consider the non-convex optimization problem in \eqref{eq:problem}, for given structure $\mathcal{A}$, associated with $\Pi_{k, \mathcal{A}}(\cdot)$, and loss function $f(x) := \tfrac{1}{2}\|b - \Phi x\|_2^2$, satisfying restricted strong convexity and smoothness properties, through generalized RIP, over $3k$ sparse ``signals'', with parameters $\alpha_{3k}$ and $\beta_{3k}$, respectively.
Let $x^\star$ be the minimizer of $f$, with $\|x^\star\|_{0, \mathcal{A}} = k$. 
Assuming $x_0 = 0$, Algorithm \ref{algo:alps} with $\mu_i = \tfrac{2}{\alpha_{3k} + \beta_{3k}}$ satisfies  $\forall \tau$ the following linear system at the $i$-th iteration:
\begin{align*}
\begin{bmatrix}
\|x_{i+1} - x^\star\|_2 \\ \|x_i - x^\star\|_2
\end{bmatrix} &\leq \underbrace{\begin{bmatrix}
\tfrac{2(\kappa - 1)}{\kappa + 1} \cdot |1 + \tau| & \tfrac{2(\kappa - 1)}{\kappa + 1} \cdot  |\tau| \\
1 & 0
\end{bmatrix}}_{:=A} \cdot 
\begin{bmatrix}
\|x_i - x^\star\|_2 \\
\|x_{i-1} - x^\star\|_2
\end{bmatrix} + 
\begin{bmatrix}
1 \\ 0
\end{bmatrix} \tfrac{2 \sqrt{\beta_{2k}}}{\alpha_{3k} + \beta_{3k}} \left\|\varepsilon \right\|_2
\end{align*}
where $\kappa := \tfrac{\beta_{3k}}{\alpha_{3k}}$, and $b = \Phi x^\star + \varepsilon$.
\end{lemma}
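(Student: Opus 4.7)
The plan is to follow the standard IHT analysis pattern (project, contract, chain), but with two twists specific to this accelerated variant: (i) a momentum identity $u_i = (1+\tau)x_i - \tau x_{i-1}$ at the end, and (ii) careful support bookkeeping so that every invocation of the generalized RIP stays within the $3k$-sparsity regime assumed in the statement.

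The first step is to peel off the hard thresholding. Since $x^\star$ is $k$-sparse and $x_{i+1} = \Pi_{k,\mathcal{A}}(\bar u_i)$ is a best-$k$-sparse approximation of $\bar u_i$, I would invoke the projection inequality $\|x_{i+1} - \bar u_i\|_2 \le \|x^\star - \bar u_i\|_2$, then use the triangle inequality to obtain $\|x_{i+1} - x^\star\|_2 \le 2\|\bar u_i - x^\star\|_2$. This reduces everything to controlling a single restricted gradient step.

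Next, I would expand $\bar u_i - x^\star = (u_i - x^\star) - \mu_i \nabla_{\mathcal{T}_i} f(u_i)$, substitute $\nabla f(u_i) = \Phi^\top \Phi (u_i - x^\star) - \Phi^\top \varepsilon$, and split the expression across $\mathcal{T}_i$ and $\mathcal{T}_i^c$. Because $u_i$ is supported on $\mathcal{U}_i$ of size $\le 2k$ and $|\mathcal{T}_i|\le 3k$, every vector appearing in the $\mathcal{T}_i$ block has support of size $\le 3k$, so the $3k$-RIP applies. With $\mu_i = 2/(\alpha_{3k}+\beta_{3k})$, a direct eigenvalue computation yields the operator bound $\|I - \mu_i \Phi_S^\top \Phi_S\|_{\mathrm{op}} \le (\beta_{3k}-\alpha_{3k})/(\beta_{3k}+\alpha_{3k}) = (\kappa-1)/(\kappa+1)$ on any $3k$-sized $S$. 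The noise term, after a Cauchy--Schwarz step, pairs $\Phi^\top \varepsilon$ against $x_{i+1}-x^\star$, whose support has cardinality $\le 2k$; this is where the $\sqrt{\beta_{2k}}\|\varepsilon\|_2$ factor originates, producing the final noise coefficient $\frac{2\sqrt{\beta_{2k}}}{\alpha_{3k}+\beta_{3k}}$.

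Finally, the momentum identity gives $u_i - x^\star = (1+\tau)(x_i - x^\star) - \tau(x_{i-1} - x^\star)$, hence $\|u_i - x^\star\|_2 \le |1+\tau|\|x_i-x^\star\|_2 + |\tau|\|x_{i-1}-x^\star\|_2$; chaining this into the contraction bound from the previous step and repackaging in matrix form yields exactly the $2\times 2$ linear system in the statement. The main obstacle is the support bookkeeping: the expanded set $\mathcal{T}_i$ need not contain $\texttt{supp}_{\mathcal{A}}(x^\star)$, so the restriction $(u_i-x^\star)_{\mathcal{T}_i^c} = -x^\star_{\mathcal{T}_i^c}$ leaves a residual that would naively push us past the $3k$-RIP regime. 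The active support expansion step of Algorithm \ref{algo:alps} is precisely what lets this residual be absorbed into the $(\kappa-1)/(\kappa+1)$ contraction, via the familiar pigeonhole-style argument that any entries of $x^\star$ missing from $\mathcal{T}_i$ correspond to gradient coordinates no larger than the top-$k$ ones that were kept.
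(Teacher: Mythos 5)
Your skeleton matches the paper's proof almost step for step: projection optimality of $\Pi_{k,\mathcal{A}}$, substitution of $\bar u_i = u_i - \mu_i\nabla_{\mathcal{T}_i}f(u_i)$ with $b=\Phi x^\star+\varepsilon$, the operator bound $\|I-\mu_i\Phi_{\mathcal{T}_i}^\top\Phi_{\mathcal{T}_i}\|_2\le(\kappa-1)/(\kappa+1)$ at $\mu_i=2/(\alpha_{3k}+\beta_{3k})$, the momentum identity $u_i-x^\star=(1+\tau)(x_i-x^\star)-\tau(x_{i-1}-x^\star)$, and the repackaging of the resulting second-order recursion as a first-order $2\times 2$ system. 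Two points deserve attention. First, a small internal inconsistency: you reduce to $\|x_{i+1}-x^\star\|_2\le 2\|\bar u_i-x^\star\|_2$ by the triangle inequality, but then claim the noise constant $\sqrt{\beta_{2k}}$ comes from pairing $\Phi^\top\varepsilon$ against the $2k$-sparse vector $x_{i+1}-x^\star$. Once the triangle inequality has been applied, $x_{i+1}$ is gone; the leftover noise term is $2\mu_i\|\Phi_{\mathcal{T}_i}^\top\varepsilon\|_2$, which only yields $\sqrt{\beta_{3k}}$ since $|\mathcal{T}_i|\le 3k$. To get $\beta_{2k}$ exactly as stated, keep the inner-product form as the paper does: expand $\|x_{i+1}-\bar u_i\|_2^2\le\|x^\star-\bar u_i\|_2^2$ into $\|x_{i+1}-x^\star\|_2^2\le 2\langle x_{i+1}-x^\star,\ \bar u_i-x^\star\rangle$, split the right-hand side into the contraction and noise pieces, bound each, and only then divide by $\|x_{i+1}-x^\star\|_2$.

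Second, your closing paragraph is the one genuinely unproved step of the proposal, and it is also a step the paper's proof does not perform. The paper never splits $u_i-x^\star$ across $\mathcal{T}_i$ and $\mathcal{T}_i^c$: it writes $b=\Phi_{\mathcal{T}_i}x^\star+\varepsilon$ and pushes the whole vector $u_i-x^\star$ (which is $3k$-sparse) through $I-\mu_i\Phi_{\mathcal{T}_i}^\top\Phi_{\mathcal{T}_i}$, so no pigeonhole argument ever appears. You are right to notice that $\mathcal{T}_i$ need not contain $\texttt{supp}_{\mathcal{A}}(x^\star)$, so the residual $-x^\star_{\mathcal{T}_i^c}$ is a real issue if one insists on restricting everything to $\mathcal{T}_i$; but the assertion that the active-support-expansion step lets this residual be ``absorbed into the $(\kappa-1)/(\kappa+1)$ contraction'' is precisely what would need to be proved, and the standard pigeonhole argument does not deliver it with the same constant --- it typically produces an extra additive term in the recursion. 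As written, your proof would stall at that point; either carry out that argument explicitly (and track the resulting constants), or follow the paper and avoid the split altogether by working with the unrestricted difference $u_i-x^\star$ and the generalized RIP over its (at most $3k$-sized) support.
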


\noindent \textit{Proof ideas involved:}
The proof is composed mostly of algebraic manipulations. 
For exact projection $\Pi_{k, \mathcal{A}}(\cdot)$ and due to the optimality of the step $x_{i+1} = \Pi_{k, \mathcal{A}}\left(\bar{u}_i\right)$, we observe that $\|x_{i+1} - x^\star\|_2^2 \leq 2 \left \langle x_{i+1} - x^\star, ~\bar{u}_i - x^\star \right \rangle$.
Using Definitions \ref{def_00}-\ref{def_01}, 
and due to the restriction of the active subspace to the set $\mathcal{T}_i$ per iteration (most operations --\emph{i.e.}, inner products, Euclidean distances, etc-- involved in the proof are applied on ``signals'' comprised of at most $3k$ atoms)
we get the two-step recursion.
See Appendix \ref{sec:proofs} for a detailed proof.

Lemma \ref{thm:iter_inv} just states the iteration invariant of Algorithm \ref{algo:alps}; it does not guarantee convergence.
To do so, we need to state some interesting properties of $A$. The proof is elementary and is omitted. 


\begin{lemma}{\label{lemma:nth-power0}}
Let $A$ be the $2 \times 2$ matrix, as defined above, parameterized by $ \kappa > 1$, and user-defined parameter $\tau$. 
Denote $\xi := \tfrac{2(\kappa - 1)}{\kappa + 1}$. 
The characteristic polynomial of $A$ is defined as:
\begin{align*}
\lambda^2 - \textup{\text{Tr}}(A) \cdot \lambda + \textup{\text{det}}(A) = 0
\end{align*}
where $\lambda$ represent the eigenvalue(s) of $A$.
Define $\Delta := \textup{\text{Tr}}(A)^2 - 4 \cdot \textup{\text{det}}(A) = \xi^2 \cdot (1 + \tau)^2 + 4 \xi \cdot |\tau|$.
Then, the eigenvalues of $A$ satisfy the expression: $\lambda = \tfrac{\xi \cdot |1 + \tau| \pm \sqrt{\Delta}}{2}$.
Depending on the values of $\alpha, \beta, \tau$:
\begin{itemize}[leftmargin = 0.5cm]
\item $A$ has a unique eigenvalue $\lambda  = \tfrac{\xi \cdot |1 + \tau|}{2}$, if $\Delta = 0$. This happens when $\kappa = 1$ and is not considered in this paper (we assume functions $f$ with curvature). 
\item $A$ has two complex eigenvalues; this happens when $\Delta < 0$. By construction, this case does not happen in our scenaria, since $\kappa > 1$. 
\item For all other cases, $A$ has two distinct real eigenvalues, satisfying $\lambda_{1,2} = \tfrac{\xi \cdot |1 + \tau|}{2} \pm \tfrac{\sqrt{\xi^2 \cdot (1 + \tau) + 4\xi \cdot |\tau|}}{2}$.
\end{itemize}
\end{lemma}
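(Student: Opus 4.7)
The plan is to compute the characteristic polynomial of $A$ directly and then read off its roots; the whole argument is one short computation, which is consistent with the authors' remark that the proof is elementary. Since $A$ is $2\times 2$, its characteristic polynomial takes the standard form $\lambda^2 - \text{Tr}(A)\,\lambda + \det(A) = 0$, so the only real task is to identify $\text{Tr}(A)$ and $\det(A)$ from the entries given in Lemma~\ref{thm:iter_inv} and then apply the quadratic formula.

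First, I would read off the trace as $\text{Tr}(A) = \xi\,|1+\tau|$ (the $(2,2)$ entry of $A$ vanishes), and the determinant as $\det(A) = -\xi\,|\tau|$ (the diagonal product is zero, and the off-diagonals contribute $-\xi|\tau|\cdot 1$). Substituting these into the standard discriminant $\Delta = \text{Tr}(A)^2 - 4\det(A)$ and using the identity $|1+\tau|^2 = (1+\tau)^2$ recovers the claimed formula $\Delta = \xi^2(1+\tau)^2 + 4\xi|\tau|$. Plugging this into the quadratic formula immediately gives $\lambda = \tfrac{\xi\,|1+\tau| \pm \sqrt{\Delta}}{2}$, which matches the displayed expression in the statement.

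The case analysis is then just a matter of inspecting the sign of $\Delta$. Both summands in the derived expression are nonnegative whenever $\xi \geq 0$, i.e.\ whenever $\kappa \geq 1$, so the complex-eigenvalue case $\Delta < 0$ cannot arise in our regime. The degenerate equality $\Delta = 0$ requires both summands to vanish simultaneously; since $\xi = \tfrac{2(\kappa-1)}{\kappa+1}$, this forces $\xi = 0$ and hence $\kappa = 1$, which is precisely the degenerate case excluded by the strong-convexity hypothesis on $f$. Under the working assumption $\kappa > 1$, we therefore always have $\Delta > 0$, and the two distinct real roots $\lambda_{1,2}$ are obtained by substituting the formula for $\Delta$ into the quadratic-formula expression above.

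I do not anticipate any genuine obstacle: the lemma is pure $2\times 2$ linear algebra on an explicitly given matrix, and every step is a direct substitution. The only spots where a careful reader might pause are verifying that $|1+\tau|^2 = (1+\tau)^2$ so that the absolute value can be absorbed inside the square, and confirming that $\xi > 0$ strictly when $\kappa > 1$, which is what rules out both the $\Delta = 0$ and $\Delta < 0$ branches in our setting.
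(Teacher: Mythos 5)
Your proposal is correct and matches the (omitted, "elementary") argument the paper intends: read off $\textup{Tr}(A)=\xi|1+\tau|$ and $\det(A)=-\xi|\tau|$, apply the quadratic formula, and note both summands of $\Delta$ are nonnegative once $\xi>0$. As a minor aside, your computation also confirms that the $(1+\tau)$ under the square root in the lemma's third bullet should read $(1+\tau)^2$, a typo in the statement rather than an issue with your argument.
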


Define $y(i+1) = \begin{bmatrix} \|x_{i+1} - x^\star\|_2 \\ \|x_i - x^\star\|_2 \end{bmatrix} $; then, the linear system in Lemma \ref{thm:iter_inv} for the $i$-th iteration becomes $y(i+1) \leq A \cdot y(i)$.
$A$ has only non-negative values; we can unfold this linear system over $T$ iterations such that 
\begin{align*}
y(T) \leq A^{T} \cdot y(0) + \left(\sum_{i = 0}^{T-1} A^i \right) \cdot \begin{bmatrix} 1 \\ 0 \end{bmatrix} \cdot \tfrac{2 \sqrt{\beta_{2k}}}{\alpha_{3k} + \beta_{3k}} \left\|\varepsilon \right\|_2.
\end{align*}
Here, we make the convention that $x_{-1} = x_{0} = 0$, such that $y(0) = \begin{bmatrix} \|x_{0} - x^\star\|_2 \\ \|x_{-1} - x^\star\|_2 \end{bmatrix} = \begin{bmatrix} 1 \\ 1 \end{bmatrix} \cdot \|x^\star\|_2$.
The following lemma describes how one can compute a power of a $2 \times 2$ matrix $A$, $A^i$, through the eigenvalues $\lambda_{1, 2}$ (real and distinct eigenvalues); the proof is provided in Section \ref{subsec:n-thpowerx}. 
To the best of our knowledge, there is no detailed proof on this lemma in the literature. 
\begin{lemma}[\cite{williams1992the}]{\label{lemma:nth-power2}}
Let $A$ be a $2 \times 2$ matrix with real eigenvalues $\lambda_{1,2}$. 
Then, the following expression holds, when $\lambda_1 \neq \lambda_2$:
\begin{align*}
A^i = \frac{\lambda_1^i - \lambda_2^i}{\lambda_1 - \lambda_2} \cdot A - \lambda_1 \lambda_2 \cdot \frac{\lambda_1^{i-1} - \lambda_2^{i-1}}{\lambda_1 - \lambda_2} \cdot I
\end{align*} 
where $\lambda_i$ denotes the $i$-th eigenvalue of A in order.
\end{lemma}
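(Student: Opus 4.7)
The plan is to give an induction proof based on the Cayley--Hamilton theorem applied to $A$. Since $A$ is $2 \times 2$ with characteristic polynomial $\lambda^2 - \mathrm{Tr}(A)\lambda + \det(A)$, Cayley--Hamilton yields $A^2 = (\lambda_1 + \lambda_2) A - \lambda_1 \lambda_2 I$, where we use Vieta's formulas $\mathrm{Tr}(A) = \lambda_1 + \lambda_2$ and $\det(A) = \lambda_1 \lambda_2$. Iterating this identity, every power $A^i$ is necessarily an affine combination of $A$ and $I$, so the only question is to identify the scalar coefficients; the proposed formula in the lemma is exactly the claim that these coefficients are $s_i := \frac{\lambda_1^i - \lambda_2^i}{\lambda_1 - \lambda_2}$ and $-\lambda_1\lambda_2 \, s_{i-1}$, which is well-defined precisely because $\lambda_1 \neq \lambda_2$.

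The main steps would be as follows. First I would state the claim in the compact form $A^i = s_i A - \lambda_1 \lambda_2 \, s_{i-1} I$ and check the base case $i = 1$: since $s_1 = 1$ and $s_0 = 0$, the right-hand side collapses to $A$, as required. Second, assuming the formula holds for index $i$, I would multiply both sides on the left by $A$, replace $A^2$ using Cayley--Hamilton, and collect terms to obtain
\begin{align*}
A^{i+1} = \bigl[(\lambda_1 + \lambda_2) s_i - \lambda_1 \lambda_2 \, s_{i-1}\bigr] A - \lambda_1 \lambda_2 \, s_i \, I.
\end{align*}
The induction then closes provided the bracketed scalar equals $s_{i+1}$, i.e., provided the sequence $\{s_i\}$ satisfies the linear recurrence $s_{i+1} = (\lambda_1 + \lambda_2) s_i - \lambda_1 \lambda_2 \, s_{i-1}$. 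This last identity is a short algebraic verification: expanding $(\lambda_1 + \lambda_2)(\lambda_1^i - \lambda_2^i) - \lambda_1 \lambda_2 (\lambda_1^{i-1} - \lambda_2^{i-1})$ telescopes to $\lambda_1^{i+1} - \lambda_2^{i+1}$, and dividing by $\lambda_1 - \lambda_2$ gives $s_{i+1}$.

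An alternative route, which I would mention but not pursue, is to diagonalize: write $A = P \,\mathrm{diag}(\lambda_1,\lambda_2)\, P^{-1}$ (possible because the eigenvalues are distinct), compute $A^i = P \,\mathrm{diag}(\lambda_1^i,\lambda_2^i)\, P^{-1}$, and then express the diagonal matrix as a linear combination of $\mathrm{diag}(\lambda_1,\lambda_2)$ and $I$ using the same two scalars $s_i$ and $-\lambda_1\lambda_2 s_{i-1}$; conjugating back recovers the lemma. This is slightly less self-contained, since it requires exhibiting $P$, so I prefer the Cayley--Hamilton plus induction argument.

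There is no real obstacle here, since the statement is purely a matrix identity with no hypotheses beyond $\lambda_1 \neq \lambda_2$ (which keeps $s_i$ well-defined) and the existence of two real eigenvalues (guaranteed by Lemma \ref{lemma:nth-power0} in the regime $\kappa > 1$ that we use). The only care needed is in the algebraic verification of the recurrence for $s_i$; once that is in place, the induction is mechanical and the lemma follows.
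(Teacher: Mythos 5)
Your proof is correct, but it takes a different route from the paper's. You argue by induction: Cayley--Hamilton gives $A^2 = (\lambda_1+\lambda_2)A - \lambda_1\lambda_2 I$, so every power is an affine combination $A^i = s_i A - \lambda_1\lambda_2 s_{i-1} I$, and the induction closes once you verify the scalar recurrence $s_{i+1} = (\lambda_1+\lambda_2)s_i - \lambda_1\lambda_2 s_{i-1}$ for $s_i = \tfrac{\lambda_1^i-\lambda_2^i}{\lambda_1-\lambda_2}$; your base case and the telescoping check are both right. The paper instead builds a decomposition: it sets $B_1 = -(A-\lambda_1 I)$ and $B_2 = A-\lambda_2 I$, writes $A = \tfrac{\lambda_2}{\lambda_1-\lambda_2}B_1 + \tfrac{\lambda_1}{\lambda_1-\lambda_2}B_2$, and uses Cayley--Hamilton to show $B_j^2 = (\lambda_1-\lambda_2)B_j$ and $B_1B_2 = 0$, so that $A^i$ expands with no cross terms and simplifies directly to the stated formula. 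These $B_j$ are (up to scaling) the spectral projectors, so the paper's argument derives the closed form constructively rather than verifying a guessed formula, and it extends naturally to the repeated-eigenvalue case, which the paper also treats via the nilpotent matrix $C = A - \lambda I$ and the multinomial theorem. Your induction is shorter and more mechanical, at the cost of presupposing the formula; either argument is a complete and valid proof of the lemma as stated (which only concerns $\lambda_1 \neq \lambda_2$).
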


Then, the main recursion takes the following form:
\begin{small}
\begin{align}
y(T) \leq \frac{\lambda_1^T - \lambda_2^T}{\lambda_1 - \lambda_2} \cdot A \cdot y(0) - \lambda_1 \lambda_2 \frac{\lambda_1^{T-1} - \lambda_2^{T-1}}{\lambda_1 - \lambda_2} \cdot y(0) + \left(\sum_{i = 0}^{T-1} A^i \right) \cdot \begin{bmatrix} 1 \\ 0 \end{bmatrix} \cdot \tfrac{2 \sqrt{\beta_{2k}}}{\alpha_{3k} + \beta_{3k}} \left\|\varepsilon \right\|_2. \label{eq:main_iteration}
\end{align}
\end{small}

Observe that, in order to achieve convergence (\emph{i.e.}, the RHS convergences to zero), eigenvalues play a crucial role:
Both $A$ and $y(0)$ are constant quantities, and only how fast the quantities $\lambda_1^T - \lambda_2^T$ and $\lambda_1^{T-1} - \lambda_2^{T-1}$ ``shrink'' matter most.

Given that eigenvalues appear in the above expressions in some power (\emph{i.e.}, $\lambda_{1,2}^T$ and $\lambda_{1,2}^{T-1}$), we require $|\lambda_{1,2}| < 1$ for convergence. 
%
%
%
To achieve $|\lambda_{1,2}| < 1$, we have:
\begin{align*}
|\lambda_{1,2}| &= \left|\tfrac{\xi \cdot |1+\tau|}{2} \pm \sqrt{\tfrac{\xi^2(1+\tau)^2}{4} + \xi \cdot |\tau|}\right| \\
&\leq \left|\tfrac{\xi \cdot |1+\tau|}{2}\right| + \left|\sqrt{\tfrac{\xi^2(1+\tau)^2}{4} + \xi \cdot |\tau|}\right| \\
& \stackrel{(i)}{\leq}  \tfrac{\xi \cdot |1+\tau|}{2} + \tfrac{1}{2} \sqrt{ \xi(1+|\tau|)^2 + 4\xi(1+|\tau|)^2} \\
&\stackrel{(i)}{\leq}  \tfrac{\xi^{\frac{1}{2}}(1+|\tau|)}{2} + \tfrac{\sqrt{5}}{2}  \xi^{\frac{1}{2}}(1+|\tau|) \\
& = \varphi \cdot \xi^{\frac{1}{2}}(1+|\tau|)
\end{align*}
where $(i)$ is due to $\xi < 1$, and $\varphi = \sfrac{(1 + \sqrt{5})}{2}$ denotes the golden ratio.
Thus, upper bounding the RHS to ensure $|\lambda_{1,2}| < 1$ implies $|\tau| < \tfrac{1 - \varphi \cdot \xi^{1/2}}{\varphi \cdot \xi^{1/2}}$. 
Also, under this requirement, $\sum_{i = 0}^{T-1} A^i$ converges to: 
\begin{align*}
\sum_{i = 0}^{T-1} A^i = (I - A)^{-1}(I - A^T),
\end{align*}
where:
\begin{align*}
B:= (I - A)^{-1} = \tfrac{1}{1 - \xi(1 + 2\tau)} \begin{bmatrix} 1 & \xi \tau \\ 1 & 1 - \xi(1 + \tau) \end{bmatrix}.
\end{align*}

Using the assumption $|\lambda_{1,2}| < 1$ for $|\tau| < \tfrac{1 - \varphi \cdot \xi^{1/2}}{\varphi \cdot \xi^{1/2}}$, \eqref{eq:main_iteration} further transforms to:
\begin{align*}
y(T) &\leq \frac{\lambda_1^T - \lambda_2^T}{\lambda_1 - \lambda_2} \cdot A \cdot y(0) - \lambda_1 \lambda_2 \frac{\lambda_1^{T-1} - \lambda_2^{T-1}}{\lambda_1 - \lambda_2} \cdot y(0) + B (I - A^{T}) \begin{bmatrix} 1 \\ 0 \end{bmatrix} \tfrac{2 \sqrt{\beta_{2k}}}{\alpha_{3k} + \beta_{3k}} \left\|\varepsilon \right\|_2 \\
         &\stackrel{(i)}{\leq} \frac{|\lambda_1|^T + |\lambda_2|^T}{|\lambda_1| - |\lambda_2|} \cdot A \cdot y(0) + |\lambda_1 \lambda_2| \cdot \frac{|\lambda_1|^{T-1} + |\lambda_2|^{T-1}}{|\lambda_1| - |\lambda_2|} \cdot y(0) + B (I - A^{T}) \begin{bmatrix} 1 \\ 0 \end{bmatrix} \tfrac{2 \sqrt{\beta_{2k}}}{\alpha_{3k} + \beta_{3k}} \left\|\varepsilon \right\|_2 \\
         &\stackrel{(ii)}{\leq} \frac{2 |\lambda_1|^T}{|\lambda_1| - |\lambda_2|}\cdot \left(A  + I \right) \cdot y(0)  + B (I - A^{T})\begin{bmatrix} 1 \\ 0 \end{bmatrix} \tfrac{2 \sqrt{\beta_{2k}}}{\alpha_{3k} + \beta_{3k}} \left\|\varepsilon \right\|_2
\end{align*}
where $(i)$ is due to $A \cdot y(0)$ and $y(0)$ being positive quantities,  and $(ii)$ is due to $1 > |\lambda_1| > |\lambda_2|$.
Focusing on the error term, we can split and expand the term similarly to obtain:
\begin{align*}
B (I - A^{T})\begin{bmatrix} 1 \\ 0 \end{bmatrix} \tfrac{2 \sqrt{\beta_{2k}}}{\alpha_{3k} + \beta_{3k}} \left\|\varepsilon \right\|_2 
&\leq \frac{2 |\lambda_1|^T}{|\lambda_1| - |\lambda_2|} B (A + I) \begin{bmatrix} 1 \\ 0 \end{bmatrix} \tfrac{2 \sqrt{\beta_{2k}}}{\alpha_{3k} + \beta_{3k}} \left\|\varepsilon \right\|_2 + B \begin{bmatrix} 1 \\ 0 \end{bmatrix} \tfrac{2 \sqrt{\beta_{2k}}}{\alpha_{3k} + \beta_{3k}} \left\|\varepsilon \right\|_2 \\ 
&= \frac{2 |\lambda_1|^T}{|\lambda_1| - |\lambda_2|} \tfrac{1}{1 - \xi(1 + 2\tau)} \begin{bmatrix} 1 + \xi(1 + 2\tau) \\ 1 + \xi(1 + \tau) \end{bmatrix} \tfrac{2 \sqrt{\beta_{2k}}}{\alpha_{3k} + \beta_{3k}} \left\|\varepsilon \right\|_2 + \tfrac{1}{1 - \xi(1 + 2\tau)} \begin{bmatrix} 1 \\ 1 \end{bmatrix}\tfrac{2 \sqrt{\beta_{2k}}}{\alpha_{3k} + \beta_{3k}} \left\|\varepsilon \right\|_2 
\end{align*}

Combining the above, we get:
\begin{small}
\begin{align*}
y(T) &\leq \frac{2 |\lambda_1|^T}{|\lambda_1| - |\lambda_2|}\cdot \left(A  + I \right) \cdot y(0)  + \frac{2 |\lambda_1|^T}{|\lambda_1| - |\lambda_2|} \tfrac{1}{1 - \xi(1 + 2\tau)} \begin{bmatrix} 1 + \xi(1 + 2\tau) \\ 1 + \xi(1 + \tau) \end{bmatrix} \tfrac{2 \sqrt{\beta_{2k}}}{\alpha_{3k} + \beta_{3k}} \left\|\varepsilon \right\|_2 + \tfrac{1}{1 - \xi(1 + 2\tau)} \begin{bmatrix} 1 \\ 1 \end{bmatrix}\tfrac{2 \sqrt{\beta_{2k}}}{\alpha_{3k} + \beta_{3k}} \left\|\varepsilon \right\|_2 \\
&= \frac{2 |\lambda_1|^T}{|\lambda_1| - |\lambda_2|} \cdot \left( \left(A  + I \right) \cdot y(0) + \tfrac{1}{1 - \xi(1 + 2\tau)}  \begin{bmatrix} 1 + \xi(1 + 2\tau) \\ 1 + \xi(1 + \tau) \end{bmatrix} \tfrac{2 \sqrt{\beta_{2k}}}{\alpha_{3k} + \beta_{3k}} \left\|\varepsilon \right\|_2\right) + \tfrac{1}{1 - \xi(1 + 2\tau)} \begin{bmatrix} 1 \\ 1 \end{bmatrix}\tfrac{2 \sqrt{\beta_{2k}}}{\alpha_{3k} + \beta_{3k}} \left\|\varepsilon \right\|_2
\end{align*}
\end{small}
Focusing on the first entry of $y(T)$ and substituting the first row of $A$ and $y(0)$, we obtain the following inequality:
\begin{align}
\|x_{T} - x^\star\|_2 \leq \tfrac{2 \cdot |\lambda_1|^T}{|\lambda_1| - |\lambda_2|} \cdot \left( (1 + \xi(1 + 2\tau)) \cdot \|x^\star\|_2 + \tfrac{1 + \xi(1 + 2\tau)}{1 - \xi(1 + 2\tau)} \tfrac{2 \sqrt{\beta_{2k}}}{\alpha_{3k} + \beta_{3k}} \left\|\varepsilon \right\|_2\right) + \tfrac{1}{1 - \xi(1 + 2\tau)} \cdot \tfrac{2 \sqrt{\beta_{2k}}}{\alpha_{3k} + \beta_{3k}} \left\|\varepsilon \right\|_2. \label{eq:main_iteration2}
\end{align}
This suggests that, as long as $|\lambda_{1,2}| < 1$, the RHS ``shrinks'' exponentially with rate $|\lambda_1|^T$, but also depends (inverse proportionally) on the spectral gap $|\lambda_1| - |\lambda_2|$.
The above lead to the following convergence result for the noiseless case; similar results can be derived for noisy cases as well:
\begin{theorem}
Under the same assumptions with Lemma \ref{thm:iter_inv}, Algorithm \ref{algo:alps} returns a $\varepsilon$-approximate solution for $f(x) = \tfrac{1}{2} \|b - \Phi x\|_2^2$, such that $\|x_T - x^\star\|_2 \leq \zeta$, within $O\left(\log \tfrac{(1 + \xi \cdot (1 + 2\tau))}{\zeta \cdot (|\lambda_1| - |\lambda_2|)}\right)$ iterations (linear convergence rate).
\end{theorem}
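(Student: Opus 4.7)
The plan is to take the recursion bound~\eqref{eq:main_iteration2} --- whose first coordinate is exactly $\|x_T - x^\star\|_2$ --- and solve for the smallest $T$ that forces its right-hand side below $\zeta$. Essentially all the heavy lifting has already been done before the theorem statement: the iteration invariant of Lemma~\ref{thm:iter_inv}, the eigenvalue characterisation of Lemma~\ref{lemma:nth-power0}, the closed-form expansion of $A^i$ via Lemma~\ref{lemma:nth-power2}, and the algebraic bookkeeping producing~\eqref{eq:main_iteration2} collectively reduce the theorem to what is really a one-line calculation about geometric sequences.

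Concretely, I would first split the right-hand side of~\eqref{eq:main_iteration2} into a transient part that scales as $|\lambda_1|^T$ and a stationary ``noise floor'' independent of $T$. Requiring the transient part to be at most $\zeta$ (or, in the noisy case, at most $\zeta$ minus the floor) gives
\[
\tfrac{2 |\lambda_1|^T}{|\lambda_1| - |\lambda_2|} \cdot \Bigl( (1 + \xi(1+2\tau)) \|x^\star\|_2 + \tfrac{1+\xi(1+2\tau)}{1-\xi(1+2\tau)} \cdot \tfrac{2 \sqrt{\beta_{2k}}}{\alpha_{3k}+\beta_{3k}} \|\varepsilon\|_2 \Bigr) \leq \zeta.
\]
Isolating $|\lambda_1|^T$ and taking logarithms then yields
\[
T \geq \tfrac{1}{\log(1/|\lambda_1|)} \cdot \log \tfrac{2 (1+\xi(1+2\tau)) \cdot C}{\zeta \cdot (|\lambda_1|-|\lambda_2|)},
\]
where $C$ collects $\|x^\star\|_2$ together with the noise-dependent constant. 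Since $|\lambda_1| < 1$ under the momentum choice $|\tau| \leq \tfrac{1 - \varphi \xi^{1/2}}{\varphi \xi^{1/2}}$, the prefactor $1/\log(1/|\lambda_1|)$ is a strictly positive constant in the problem parameters, which is absorbed into the $O(\cdot)$ to give the advertised iteration count. The noiseless claim in the statement is just the specialisation $\|\varepsilon\|_2 = 0$, in which case $C$ collapses to $\|x^\star\|_2$.

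The main obstacle I anticipate is keeping the constants inside $O(\cdot)$ honest, rather than any genuinely new mathematical content. Two small facts must be checked: that the spectral gap $|\lambda_1| - |\lambda_2| = \sqrt{\xi^2 (1+\tau)^2 + 4 \xi |\tau|}$ is strictly positive over the admissible range (which follows from $\kappa > 1$ and $\tau \neq -1$, both guaranteed by our standing assumptions and the allowed momentum range), and that $\log(1/|\lambda_1|)$ stays bounded away from zero. The latter degrades as $\tau$ approaches the boundary $\tfrac{1 - \varphi \xi^{1/2}}{\varphi \xi^{1/2}}$, which matches the ``rippling'' regime observed empirically, but is harmless as soon as $\tau$ is taken strictly in the interior; then the constants hidden by $O(\cdot)$ remain bounded and the linear-rate claim follows.
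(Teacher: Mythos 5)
Your proposal is correct and follows essentially the same route as the paper: force the $|\lambda_1|^T$-scaled transient term in \eqref{eq:main_iteration2} below $\zeta$, isolate $|\lambda_1|^T$, and take logarithms to obtain $T \geq \bigl\lceil \log\bigl(\tfrac{2(1+\xi(1+2\tau))\|x^\star\|_2}{\zeta(|\lambda_1|-|\lambda_2|)}\bigr) / \log|\lambda_1| \bigr\rceil$. Your added remarks on the strict positivity of the spectral gap and on $\log(1/|\lambda_1|)$ staying bounded away from zero are sensible bookkeeping, but they do not change the argument.
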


\begin{proof}
We get this result by forcing the RHS of \eqref{eq:main_iteration2} be less than $\zeta > 0$. \emph{I.e.},
\begin{align}
\tfrac{2 \cdot |\lambda_1|^T}{|\lambda_1| - |\lambda_2|} \cdot (1 + \xi \cdot (1 + 2\tau)) \cdot \|x^\star\|_2 \leq \zeta &\Rightarrow \\
|\lambda_1|^T \leq \frac{\zeta \cdot (|\lambda_1| - |\lambda_2|)}{2 \cdot (1 + \xi \cdot (1 + 2\tau)) \cdot \|x^\star\|_2} &\Rightarrow \\
T \geq \left \lceil \frac{\log \frac{2 \cdot (1 + \xi \cdot (1 + 2\tau)) \cdot \|x^\star\|_2}{\zeta \cdot (|\lambda_1| - |\lambda_2|)}}{\log |\lambda_1|} \right \rceil
\end{align} This completes the proof.
\end{proof}

\begin{figure*}[t!]
\centering
\includegraphics[width=1\textwidth]{./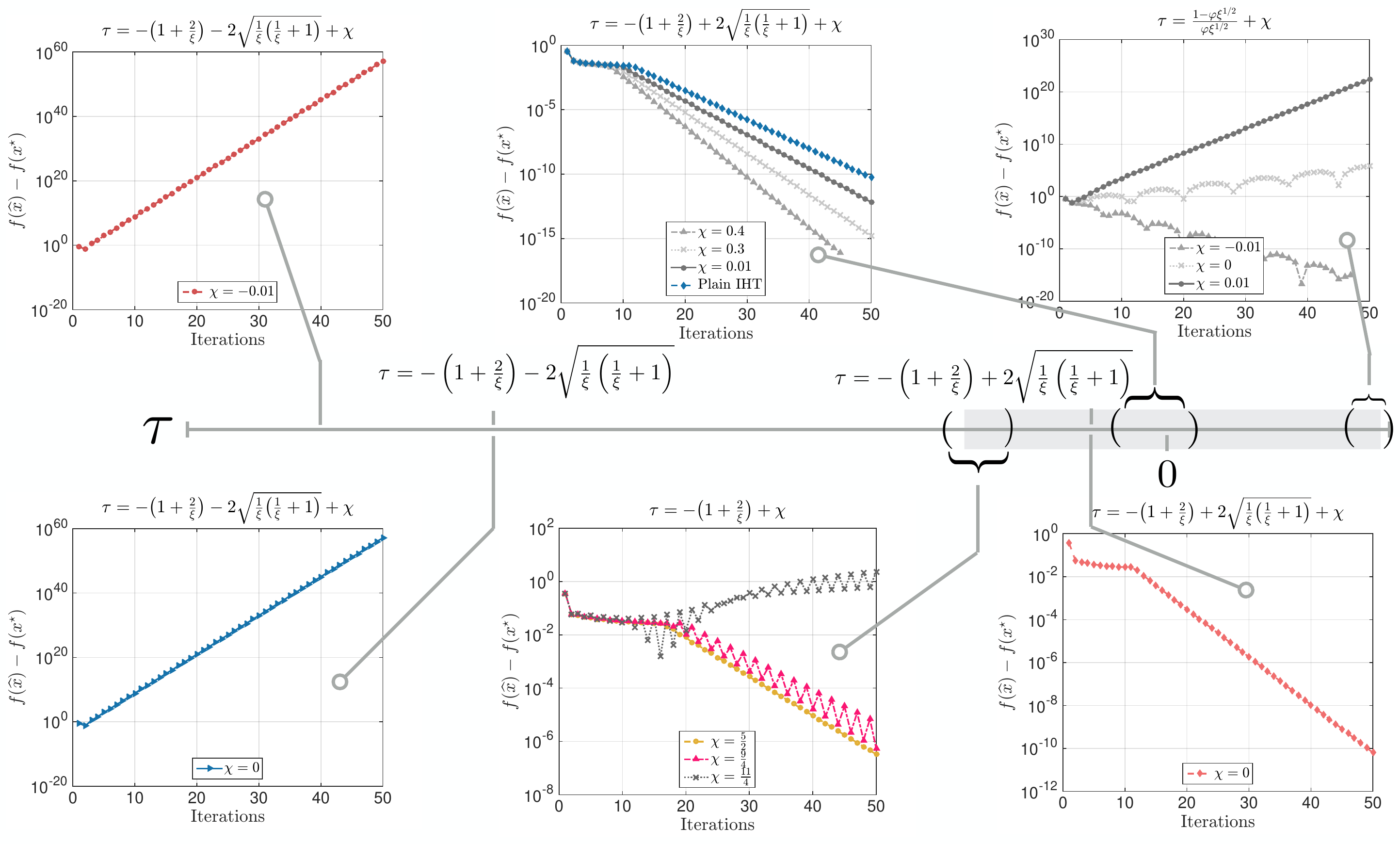}
\caption{Behavior of accelerated IHT method, applied on a toy example for sparse linear regression. Consider $\mathcal{A}$ as the plain sparsity model, and let $x^\star$ be a $k$-sparse ``signal'' in $\R^{10}$ for $k=2$, drawn from multivariate normal distribution. Also, $\|x^\star\|_2 = 1$. Let $b = \Phi x^\star$, with $\Phi \in \R^{6 \times 10}$ drawn entrywise i.i.d. from a normal distribution. Let $\mathcal{I}$ be an index set of $k$ columns in $\Phi$; there are ${n \choose k}$ possible such subsets. By Definitions \ref{def_00}-\ref{def_01}, we estimate $\alpha$ and $\beta$ as the $\lambda_{\min}(\Phi_{\mathcal{I}}^\top\Phi_{\mathcal{I}})$ and $\lambda_{\max}(\Phi_{\mathcal{I}}^\top \Phi_{\mathcal{I}})$, where $\Phi_{\mathcal{I}}$ is the submatrix of $\Phi$, indexed by $\mathcal{I}$. Here, $\alpha \approx 0.64$ and $\beta \approx 1.78$, which leads to $\xi = 1 - \sfrac{\alpha}{\beta} \approx 0.94$. We plot $f(\widehat{x}) - f(x^\star)$ vs. iteration count, where $f(x) = \tfrac{1}{2} \|b - \Phi x\|_2^2$. Gray shaded area on $\tau$ horizontal line corresponds to the range $|\tau| \leq (1 - \varphi \xi^{1/2})/(\varphi \xi^{1/2})$. (\textbf{Left panel, top and bottom row}). Accelerated IHT diverges for negative $\tau$, outside the $\tau$ shaded area. (\textbf{Middle panel, bottom row}). ``Rippling'' behavior for $\tau$ values close to the lower bound of converging $\tau$. (\textbf{Middle panel, top row}). Convergence behavior for accelerated IHT for various $\tau$ values and its comparison to plain IHT ($\tau = 0$). (\textbf{Right panel, top row}). Similar ``rippling'' behavior as $\tau$ approaches close to the upper bound of the shaded area; divergence is observed when $\tau$ goes beyond the shaded area (observe that, for $\tau$ values at the border of the shaded area, Algorithm \ref{algo:alps} still diverges and this is due to the approximation of $\xi$).}
\label{fig:momentum}
\end{figure*}


\section{Related work}{\label{sec:related}}

Optimization schemes over low-dimensional structured models have a long history; due to lack of space, we refer the reader to \cite{kyrillidis2015structured} for an overview of discrete and convex approaches.
We note that there are both projected and proximal non-convex approaches that fit under our generic model, where no acceleration is assumed.
\emph{E.g.}, see \cite{bahmani2013greedy, yuan2014gradient, jain2014iterative, jain2016structured}; our present work fills this gap.
For non-convex proximal steps see \cite{gong2013general} and references therein; again no acceleration is considered.
Below, we focus on accelerated optimization variants, as well as Frank-Wolfe methods.

\emph{Related work on accelerated IHT variants.}
Accelerated IHT algorithms for sparse recovery were first introduced in \cite{qiu2010ecme, blumensath2012accelerated, kyrillidis2011recipes}. 
In \cite{qiu2010ecme}, the authors provide a \emph{double overrelaxation} thresholding scheme \cite{salakhutdinov2003adaptive} in order to accelerate their projected gradient descent variant for sparse linear regression; however, no theoretical guarantees are provided. 
In \cite{blumensath2012accelerated}, Blumensath accelerates standard IHT methods for the same problem \cite{blumensath2009iterative, garg2009gradient} using the double overrelaxation technique in \cite{qiu2010ecme}. 
His result contains theoretical proof of linear convergence, under the assumption that the overrelaxation step is used only when the objective function decreases.
However, this approach provides no guarantees that we might skip the acceleration term often, which leads back to the non-accelerated IHT version; see also \cite{salakhutdinov2003adaptive} for a similar approach on EM algorithms.
\cite{blanchard2015cgiht} describe a family of IHT variants, based on the conjugate gradient method \cite{hestenes1952methods}, that includes under its umbrella methods like in \cite{needell2009cosamp, foucart2011hard}, with the option to perform acceleration steps; however, no theoretical justification for convergence is provided when acceleration motions are used.
\cite{kyrillidis2011recipes, kyrillidis2014matrix} contain hard-thresholding variants, based on Nesterov's ideas \cite{nesterov2013introductory}; in \cite{kyrillidis2014matrix}, the authors provide convergence rate proofs for accelerated IHT when the objective is just least-squares; no generalization to convex $f$ is provided, neither a study on varied values of $\tau$. 
\cite{wei2015fast} includes a first attempt towards using adaptive $\tau$; his approach focuses on the least-squares objective, where a closed for solution for optimal $\tau$ is found \cite{kyrillidis2011recipes}. 
However, similar to \cite{blumensath2012accelerated}, it is not guaranteed whether and how often the momentum is used, neither how to set up $\tau$ in more generic objectives; see also Section \ref{sec:counter} in the appendix.
From a convex perspective, where the non-convex constraints are substituted by their convex relaxations (either in constrained or proximal setting), the work in \cite{bioucas2007new} and \cite{beck2009fast} is relevant to the current work: 
based on two-step methods for linear systems \cite{axelsson1996iterative}, \cite{bioucas2007new} extends these ideas to non-smooth (but convex) regularized linear systems, where $f$ is a least-squares term for image denoising purposes; see also \cite{beck2009fast}.
Similar to \cite{wei2015fast, blumensath2012accelerated}, \cite{bioucas2007new} considers variants of accelerated convex gradient descent that guarantee monotonic decrease of function values per iteration. 

\emph{Related work on acceleration techniques.} 
Nesterov in \cite{nesterov1983method} was the first to consider acceleration techniques in convex optimization settings; see also Chapter 2.2 in \cite{nesterov2013introductory}.
Such acceleration schemes have been widely used as black box in machine learning and signal processing \cite{beck2009fast, bioucas2007new, schmidt2011convergence, shalev2014accelerated}.
\cite{nesterov2013gradient, becker2011templates} discuss restart heuristics, where momentum-related parameters are reset periodically after some iterations. 
\cite{o2015adaptive} provides some adaptive restart strategies, along with analysis and intuition on why they work in practice for simple convex problems.
Acceleration in non-convex settings have been very recently considered in continuous settings \cite{ghadimi2016accelerated, carmon2016accelerated, agarwal2016finding}, where $f$ could be non-convex\footnote{The guarantees in these settings are restricted to finding a good stationary point.}. 
However, none of these works, beyond \cite{paquette2017catalyst}, consider non-convex and possibly discontinuous constraints---for instance the subset of $k$-sparse sets. 
In the case of \cite{paquette2017catalyst}, our work differs in that it explores better the low-dimensional constraint sets---however, we require $f$ to be convex.
More relevant to this work is \cite{li2015accelerated}: the authors consider non-convex proximal objective and apply ideas from \cite{beck2009fast} that lead to either monotone (skipping momentum steps) or nonmonotone function value decrease behavior; further, the theoretical guarantees are based on different tools than ours.
We identify that such research questions could be directions for future work.

\emph{Related work on dynamical systems and numerical analysis.}
Multi-step schemes originate from explicit finite differences discretization of dynamical systems; 
\emph{e.g.}, the so-called Heavy-ball method \cite{polyak1964some} origins from the discretization of the friction dynamical system
$\ddot{x}(t) + \gamma \dot{x}(t) + \nabla f(x(t)) = 0$, where $\gamma > 0$ plays the role of friction.
Recent developments on this subject can be found in \cite{wilson2016lyapunov}; see also references therein.
From a different perspective, Scieur et al. \cite{scieur2017integration} use multi-step methods from numerical analysis to discretize the gradient flow equation.
We believe that extending these ideas in non-convex domains (\emph{e.g.}, when non-convex constraints are included) is of potential interest for better understanding when and why momentum methods work in practical structured scenaria.

\emph{Related work on Frank-Wolfe variants:} The Frank-Wolfe (FW) algorithm~\cite{clarkson2010coresets, jaggi2013revisiting} is an iterative projection-free convex scheme for constrained minimization. 
Frank-Wolfe often has \emph{cheap} per iteration cost by solving a constrained linear program in each iteration.
The classical analysis by~\cite{clarkson2010coresets} presents sublinear convergence for general functions. 
For strongly convex functions, FW admits linear convergence if the optimum does not lie on the boundary of the constraint set; in that case, the algorithm still has sublinear convergence rate. 
To address the boundary issue, \cite{wolfe1970convergence} allows to move away from one of the already selected atoms, where linear convergence rate can be achieved \cite{lacoste2015global}. 
Similarly, the \emph{pairwise} variant introduced by~\cite{mitchell1974finding} also has a linear convergent rate. 
This variant adjusts the weights of two of already selected atoms.
\cite{garber2015faster} present a different perspective by showing linear convergence of classical FW over strongly convex sets and general functions. 
While several variants and sufficient conditions exist that admit linear convergence rates, the use of momentum for Frank-Wolfe, to the best of our knowledge is unexplored.

\begin{figure*}[t]
\centering
\includegraphics[width=0.44\textwidth, bb=0 0 500 500]{./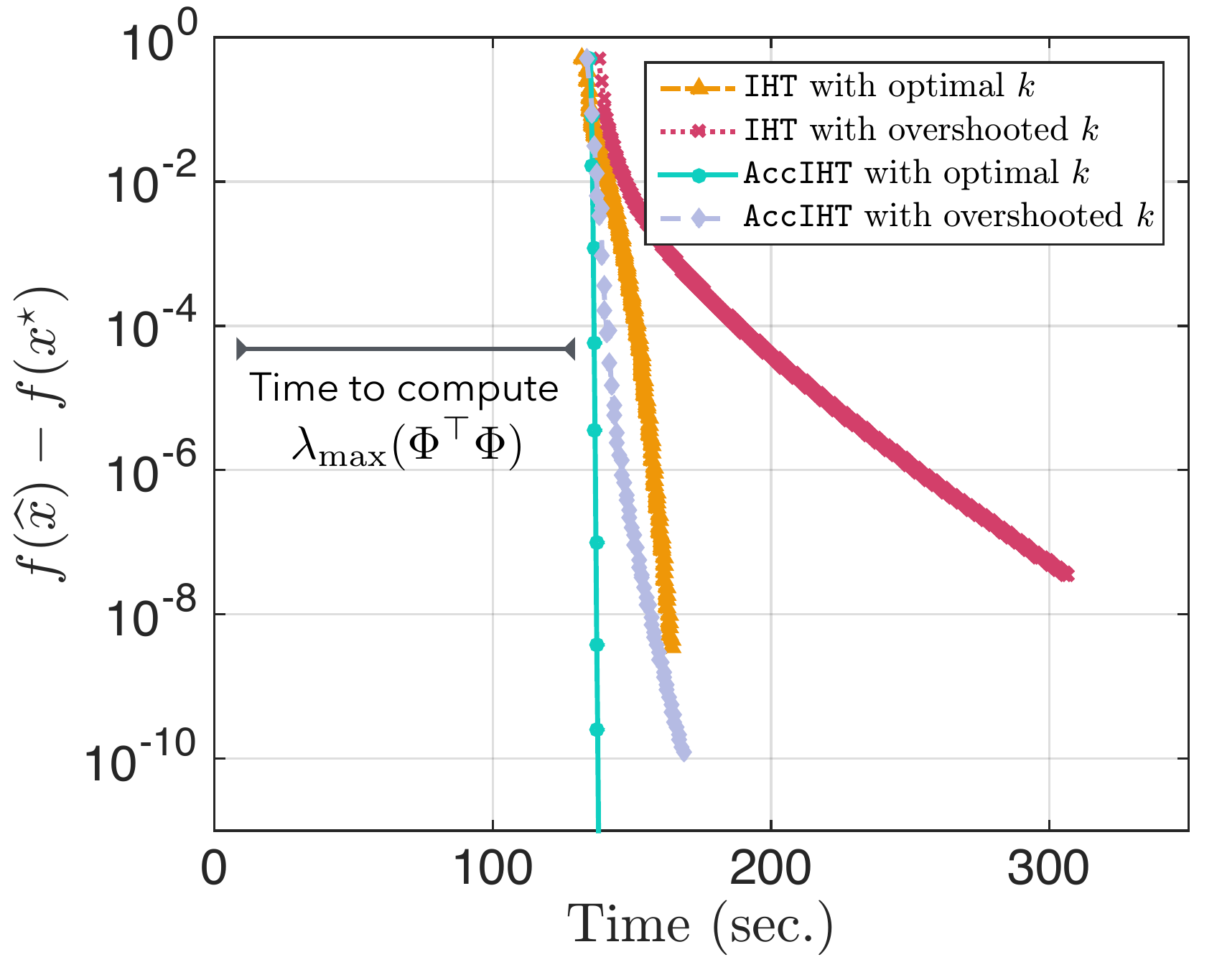} 
\includegraphics[width=0.49\textwidth, bb=0 0 550 550]{./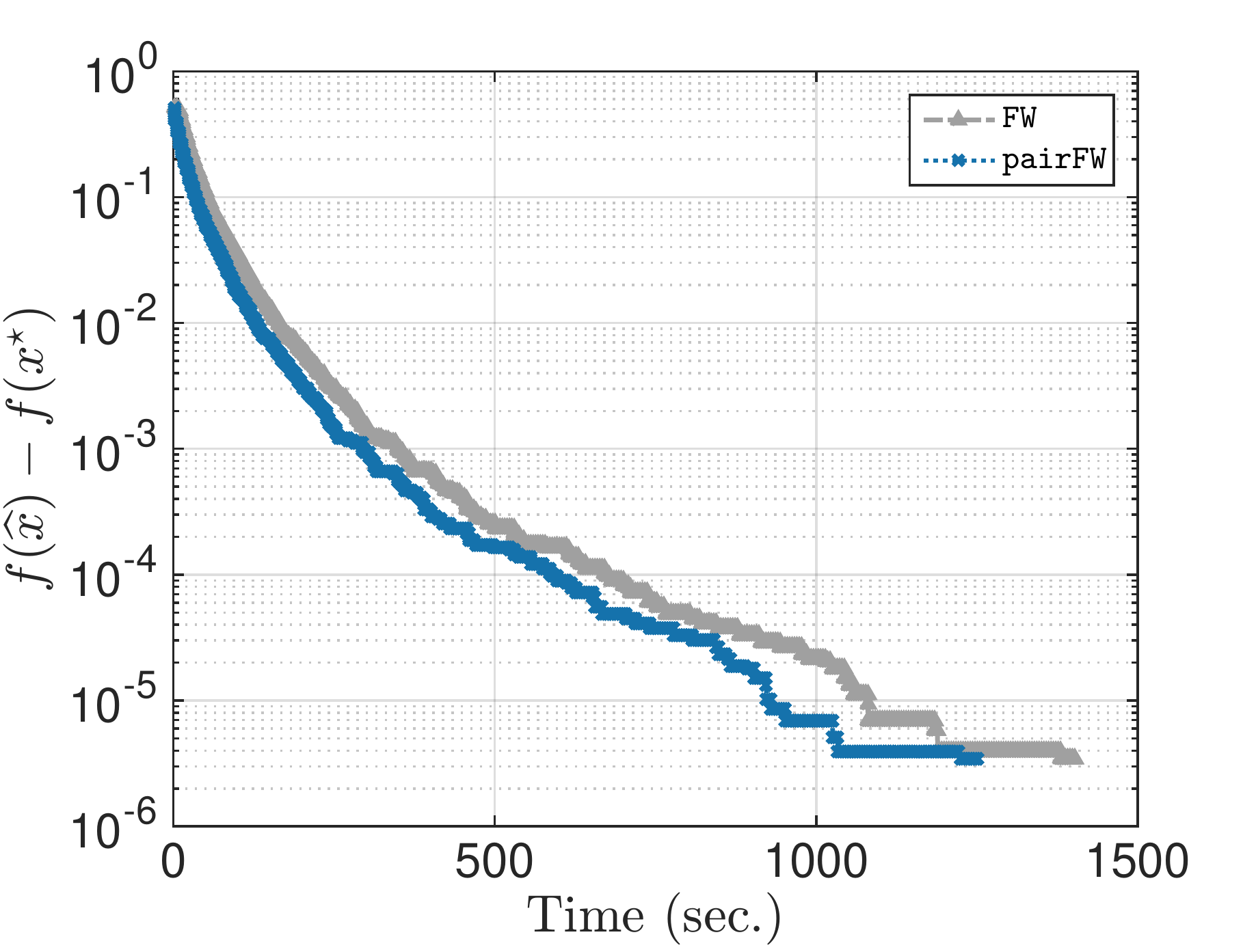} 
\caption{Time spent vs. function values gap $f(\widehat{x}) - f(x^\star)$. \texttt{AccIHT} corresponds to Algorithm \ref{algo:alps}. Beware in left plot the time spent to approximate step size, via computing $\lambda_{\max}(\Phi^\top \Phi)$.}
\label{fig:linreg1}
\end{figure*}


\begin{figure*}[t]
\centering
\includegraphics[width=0.3\textwidth, bb=0 0 500 500]{./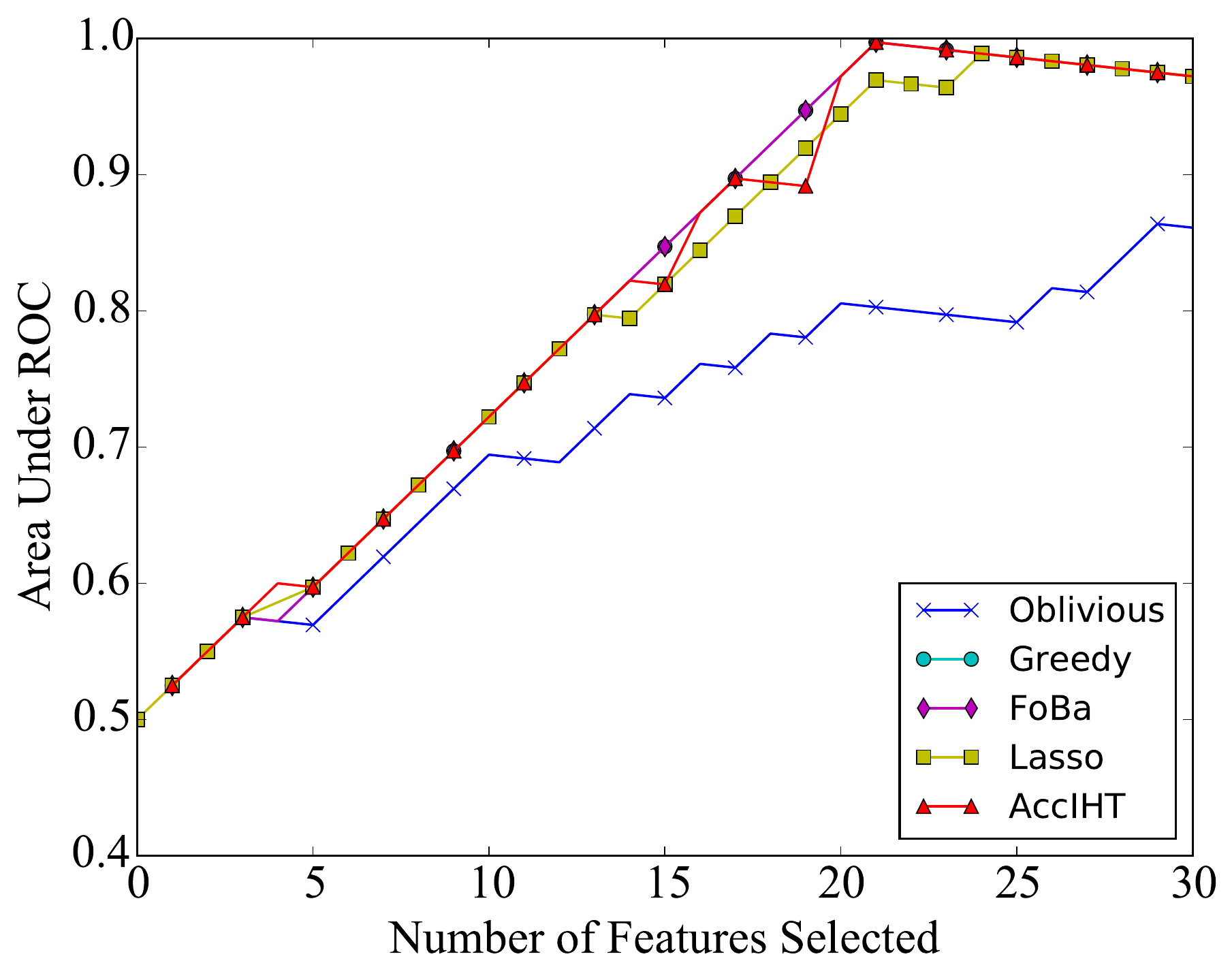} 
\includegraphics[width=0.34\textwidth, bb=0 0 550 550]{./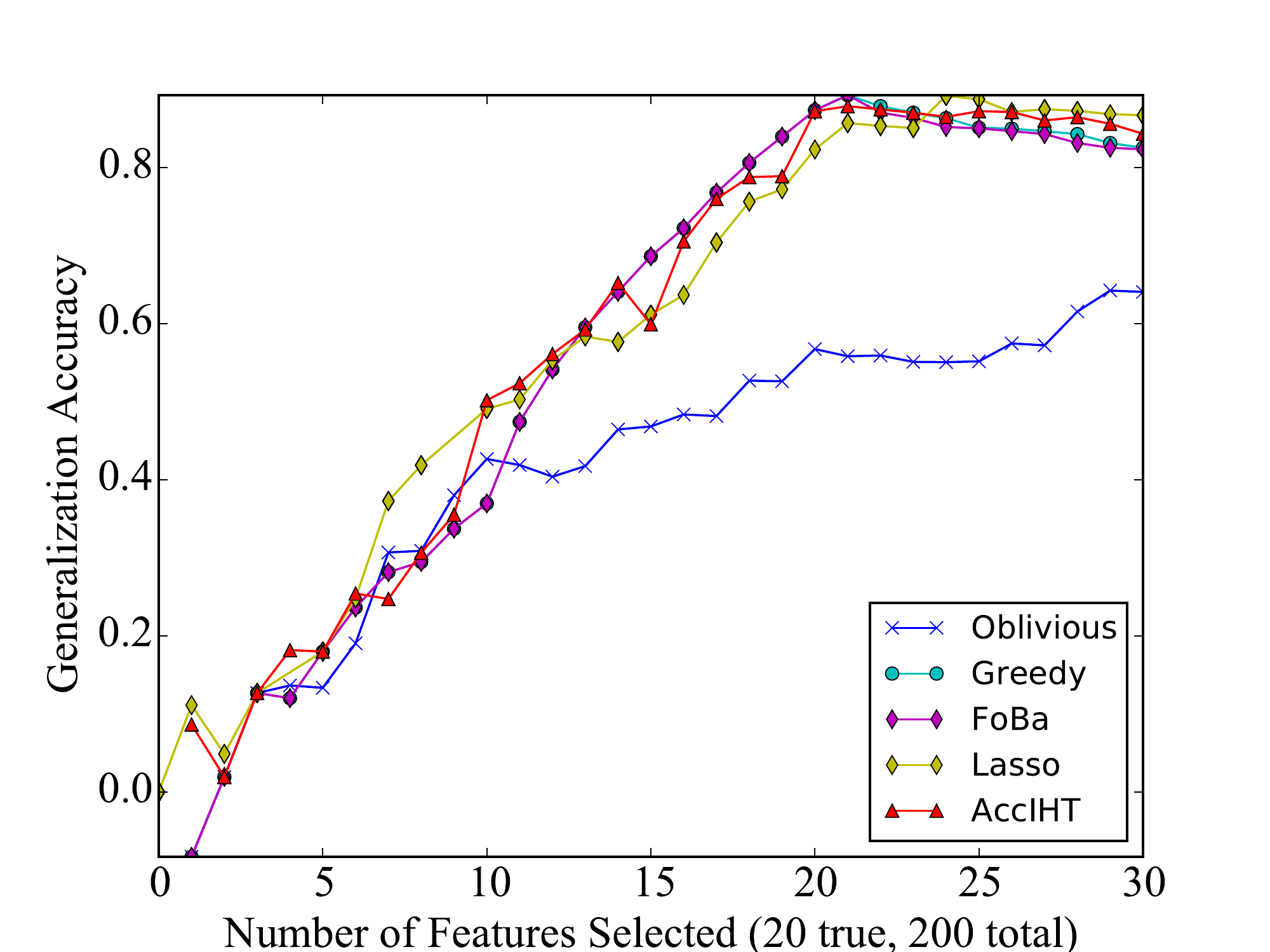}
\includegraphics[width=0.34\textwidth, bb=0 0 550 550]{./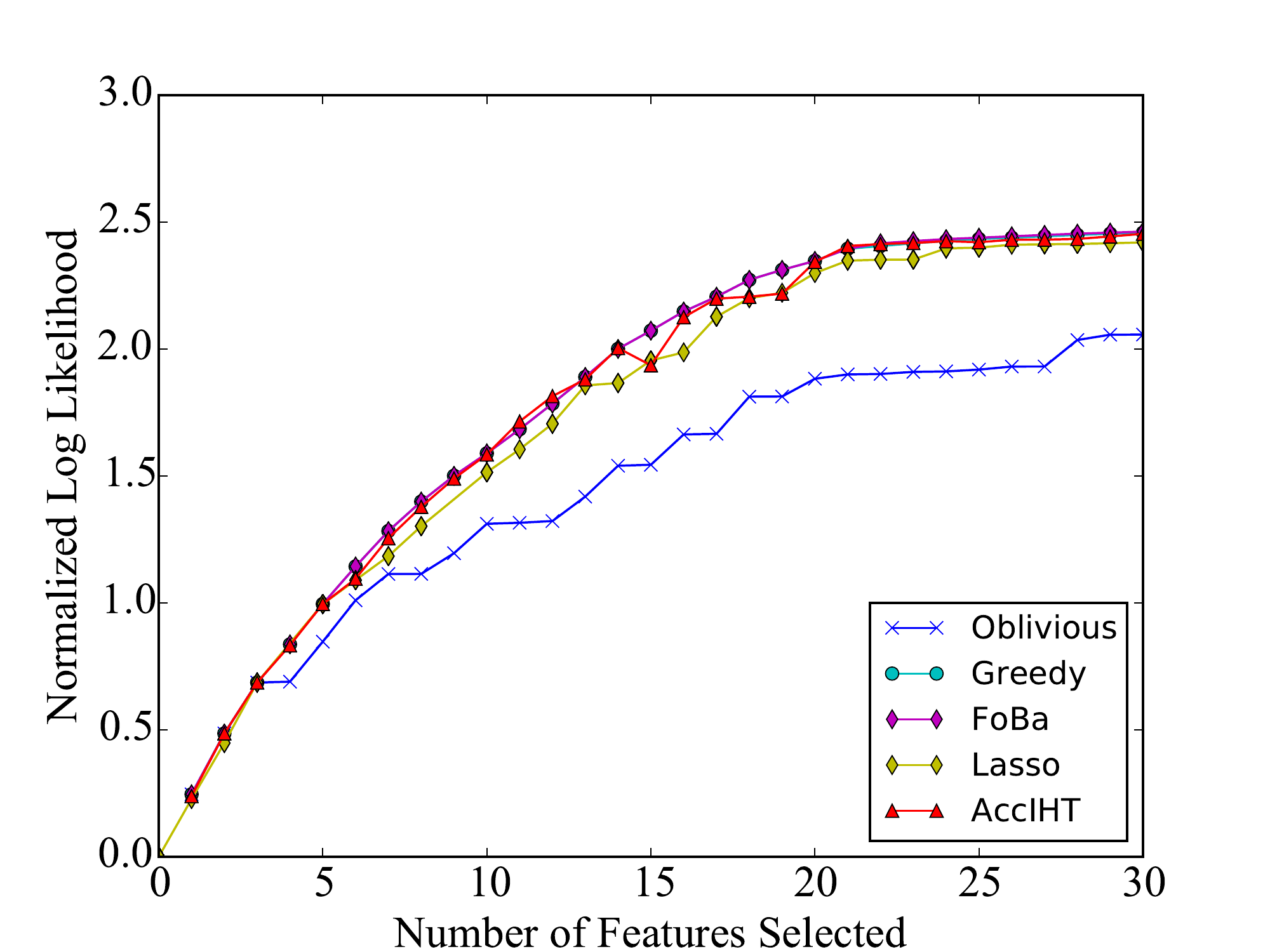}
\caption{Empirical evaluation. Algorithm \ref{algo:alps} achieves strong performance on true support recovery (left), generalization on test data (middle), and on training data fit (right) }
\label{fig:linreg2}
\end{figure*}

\section{Experiments}\label{sec:experiments}

We conducted simulations for different problems to verify our predictions.
In all experiments, we use constant $\tau = \sfrac{1}{4}$ as a potential universal momentum parameter.
Our experiments are proof of concept and demonstrate that accelerated projected gradient descent over non-convex structured sets can, not only offer high-quality recovery in practical settings, but offer much more scalable routines, compared to state-of-the-art.
\emph{Here, we present only a subset of our experimental findings and we encourage readers to go over the experimental results in the Appendix \ref{sec:moreexperiments}.}

\subsection{Sparse linear regression setting}{\label{subsec:linreg}}

For sparse linear regression, next we consider two simulated problems settings: $(i)$ with i.i.d. regressors, and $(ii)$ with correlated regressors.

\textit{Sparse linear regression under the i.i.d. Gaussian setting:} 
In this case, we consider a similar problem setting with \cite{lacoste2015global}, where $x^\star \in \R^\dim$ is the unknown normalized $k$-sparse vector, observed through the underdetermined set of linear equations: $b = \Phi x^\star$. 
$\Phi \in \R^{m \times \dim}$ is drawn randomly from a normal distribution.
We consider the standard least squares objective $f(x) = \tfrac{1}{2} \|b - \Phi x\|_2^2$ and the plain sparsity model $\mathcal{A}$ where $\|x\|_{0, \mathcal{A}} \equiv \|x\|_0$.

We compare Algorithm \ref{algo:alps} (abbreviated as AccIHT in plots) with two FW variants (see \cite{lacoste2015global} for \texttt{FW} and \texttt{pairFW})\footnote{Phase transition results and comparisons to standard algorithms such as CoSaMP (restricted to squared loss) can be found in \cite{kyrillidis2011recipes}, and thus omitted.}
Further, according to \cite{lacoste2015global}, \texttt{pairFW} performs better than \texttt{awayFW} for this problem case, as well as the plain IHT algorithm.
In this experiment, we use step sizes $1/\widehat{\beta}$ for Algorithm \ref{algo:alps} and IHT, where $\widehat{\beta} = \lambda_{\max}(\Phi^\top \Phi)$, which is suboptimal compared to the step size theory dictates. 
For the FW variants, we follow the setup in \cite{lacoste2015global} and set as the constraint set the $\lambda$-scaled $\ell_1$-norm ball, where $\lambda = 40$.
In the FW code, we further ``debias" the selected set of atoms per iteration, by performing fully corrective steps over putative solution (\emph{i.e.}, solve least-squares objective restricted over the active atom set) \cite{kyrillidis2011recipes, jain2016structured}.
We observed that such steps are necessary in our setting, in order FW to be competitive with Algorithm \ref{algo:alps} and IHT.
For IHT and Algorithm \ref{algo:alps}, we set input $k$ either exactly or use the $\ell_1$-norm phase transition plots \cite{donoho2005neighborliness}, where the input parameter for sparsity $\widehat{k}$ is overshooted. See also Section \ref{sec:practice} for more information.
We compare the above algorithms w.r.t. function values decrease and running times.

Figure \ref{fig:linreg1} depicts the summary of results we observed for the case $n = 2\cdot 10^5$, $m = 7500$ and $k = 500 $.
For IHT and accelerated IHT, we also consider the case where the input parameter for sparsity is set to $\widehat{k} = 2441 > k$. 
The graphs indicate that the accelerated hard thresholding technique can be much more efficient and scalable than the rest of the algorithms, while at the same time being at least as good in support/``signal" recovery performance.
For instance, while Algorithm \ref{algo:alps} is only $1.2 \times $ faster than IHT, when $k$ is known exactly, Algorithm \ref{algo:alps} is more resilient at overshooting $k$: in that case, IHT could take $> 2 \times$ time to get to the same level of accuracy.
At the same time, Algorithm \ref{algo:alps} detects much faster the correct support, compared to plain IHT.
Compared to FW methods (right plot), Algorithm \ref{algo:alps} is at least $10 \times$ faster than FW variants.

As stated before, we only focus on the optimization efficiency of the algorithms, not their statistical efficiency. 
That being said, we consider settings that are above the phase retrieval curve \cite{}, and here we make no comparisons and claims regarding the number of samples required to complete the sparse linear regression task. 
We leave such work for an extended version of this work.


\emph{Sparse linear regression with correlated regressors:} 
In this section, we test Algorithm~\ref{algo:alps} for support recovery, generalization and training loss performance in the sparse linear regression, under a different data generation setting. 
We generate the data as follows. 
We generate the feature matrix $800 \times  200$ design matrix $\Phi$ according to a first order auto-regressive process with $\texttt{correlation} = 0.4$. 
This ensures features are correlated with each other, which further makes feature selection a non-trivial task.
We normalize the feature matrix so that each feature has $\ell_2$-norm equal to one. 
We generate an arbitrary weight vector $x^\star$ with $\| x^\star\|_0 = 20$ and $\|x^\star\|_2=1$. 
The response vector $b$ is then computed as $y = \Phi x^\star + \varepsilon$, where $\varepsilon$ is guassian iid noise that is generated to ensure that the signal-to-noise ratio is $10$. 
Finally, the generated data is randomly split $50$-$50$ into training and test sets. 

We compare against Lasso \cite{scikit-learn}, oblivious greedy selection (Oblivious \cite{das2011submodular}), forward greedy selection (Greedy~\cite{das2011submodular}), and forward backward selection (FoBa~\cite{zhang2009adaptive}). The metrics we use to compare on are the generalization accuracy ($R^2$ coefficient determination performance on test set), recovery of true support (AUC metric on predicted support vs. true support), and training data fit (log likelihood on the training set). 
The results are presented in Figure~\ref{fig:linreg2}, and shows Algorithm \ref{algo:alps} performs very competitively: it is almost always better or equal to other methods across different sparsity levels.


\section{Overview and future directions}

The use of hard-thresholding operations is widely known.
In this work, we study acceleration techniques in simple hard-thresholding gradient procedures and characterize their performance; to the best of our knowledge, this is the first work to provide theoretical support for these type of algorithms.
Our preliminary results show evidence that machine learning problems can be efficiently solved using our accelerated non-convex variant, which is at least competitive with state of the art and comes with convergence guarantees. 

Our approach shows linear convergence; however, in theory, the acceleration achieved has dependence on the condition number of the problem not better than plain IHT.
This leaves open the question on what types of conditions are sufficient to guarantee the better acceleration of momentum in such non-convex settings? 

Apart from the future directions ``scattered" in the main text, another possible direction lies at the intersection of dynamical systems and numerical analysis with optimization.
Recent developments on this subject can be found in \cite{wilson2016lyapunov} and \cite{scieur2017integration}. 
We believe that extending these ideas in non-convex domains is interesting to better understand when and why momentum methods work in practical structured scenaria.


\newpage
\bibliographystyle{unsrt}
\bibliography{biblio}

\newpage
\onecolumn
\appendix

\section{Proof of Lemma \ref{thm:iter_inv}}{\label{sec:proofs}}

We start by observing that: $\|x_{i+1} - \bar{u}_i\|_2^2 \leq \|x^\star - \bar{u}_i \|_2^2$, due to the exactness of the hard thresholding operation.
Note that this holds for most $\mathcal{A}$ of interest\footnote{For example, in the case of matrices and low-rankness, this operation holds due to the Eckart-Young-Mirsky-Steward theorem, and the inner products of vectors naturally extend to the inner products over matrices.} but, again for simplicity, we will focus on the sparsity model here.
By adding and subtracting $x^\star$ and expanding the squares, we get:
\begin{align*}
\|x_{i+1} - x^\star + x^\star - \bar{u}_i\|_2^2 &\leq \|x^\star - \bar{u}_i \|_2^2 \Rightarrow \\
\|x_{i+1} - x^\star\|_2^2 + \|x^\star - \bar{u}_i\|_2^2 + 2 \left\langle x_{i+1} - x^\star, ~x^\star - \bar{u}_i \right \rangle &\leq \|x^\star - \bar{u}_i\|_2^2 \Rightarrow \\
\|x_{i+1} - x^\star\|_2^2 &\leq 2 \left \langle x_{i+1} - x^\star, ~\bar{u}_i - x^\star \right \rangle
\end{align*}

We also know that:
\begin{align*}
\bar{u}_i &= u_i - \mu_i \nabla_{\mathcal{T}_i} f(u_i) \\
 	      &= u_i + \mu_i \Phi_{\mathcal{T}_i}^\top(b - \Phi_{\mathcal{T}_i} u_i) \\
	      &= u_i + \mu_i \Phi_{\mathcal{T}_i}^\top(\Phi_{\mathcal{T}_i} x^\star + \varepsilon - \Phi_{\mathcal{T}_i} u_i) \\
	      &= u_i + \mu_i \Phi_{\mathcal{T}_i}^\top\Phi_{\mathcal{T}_i} \left(x^\star - u_i \right) + \mu_i \Phi_{\mathcal{T}_i}^\top \varepsilon
\end{align*}
Going back to the original inequality, we obtain:
\begin{align*}
\|x_{i+1} - x^\star\|_2^2 &\leq 2 \left \langle x_{i+1} - x^\star, ~u_i + \mu_i \Phi_{\mathcal{T}_i}^\top\Phi_{\mathcal{T}_i} \left(x^\star - u_i \right) + \mu_i \Phi_{\mathcal{T}_i}^\top \varepsilon - x^\star \right \rangle \\
								  &= 2 \left \langle x_{i+1} - x^\star, ~\left(I - \mu_i \Phi_{\mathcal{T}_i}^\top\Phi_{\mathcal{T}_i}\right)\left(u_i - x^\star\right)\right \rangle + 2\left \langle x_{i+1} - x^\star, ~\mu_i \Phi_{\mathcal{T}_i}^\top \varepsilon \right \rangle \\
								  &\leq 2 \left\| x_{i+1} - x^\star \right \|_2 \cdot \left\| \left(I - \mu_i \Phi_{\mathcal{T}_i}^\top\Phi_{\mathcal{T}_i}\right)\left(u_i - x^\star\right)\right\|_2 + 2\mu_i \left \| \Phi_{\mathcal{T}_i} \left(x_{i+1} - x^\star\right) \right\|_2 \cdot \left\|\varepsilon \right\|_2 \\
								  &\leq 2 \left\| x_{i+1} - x^\star \right\|_2 \cdot \left\| I - \mu_i \Phi_{\mathcal{T}_i}^\top\Phi_{\mathcal{T}_i} \right\|_2 \cdot \left\| u_i - x^\star \right\|_2 + 2\mu_i \left \| \Phi_{\mathcal{T}_i} \left(x_{i+1} - x^\star\right) \right\|_2 \cdot \left\|\varepsilon \right\|_2
\end{align*}
Using the properties of generalized restricted isometry property, we observe that: $\left \| \Phi_{\mathcal{T}_i} \left(x_{i+1} - x^\star\right) \right\|_2 \leq \sqrt{\beta_{2k}} \|x_{i+1} - x^\star\|_2$, and:
\begin{align*}
\left\| I - \mu_i \Phi_{\mathcal{T}_i}^\top\Phi_{\mathcal{T}_i} \right\|_2 \leq \frac{\beta_{3k} - \alpha_{3k}}{\alpha_{3k} + \beta_{3k}}
\end{align*}
In the last inequality we use the generalized RIP:
\begin{align*}
\alpha_{rk} \|x\|_2^2 \leq \|\Phi x\|_2^2 \leq \beta_{rk} \|x\|_2^2,
\end{align*}
and the fact that we can minimize with respect to $\mu_i$ to get:
\begin{align*}
\min_{\mu_i} \left\| I - \mu_i \Phi_{\mathcal{T}_i}^\top\Phi_{\mathcal{T}_i} \right\|_2 \leq \min_{\mu_i} \max\{\mu_i \beta_{3k} - 1, ~1 - \mu_i \alpha_{3k} \} = \frac{\beta_{3k} - \alpha_{3k}}{\alpha_{3k} + \beta_{3k}}
\end{align*}
for $\mu_i = \frac{2}{\alpha_{3k} + \beta_{3k}}$.

The above lead to:
\begin{align*}
\|x_{i+1} - x^\star\|_2^2 &\leq 2 \left\| x_{i+1} - x^\star \right\|_2 \tfrac{\beta_{3k} - \alpha_{3k}}{\alpha_{3k} + \beta_{3k}} \cdot \left\| u_i - x^\star \right\|_2 + \tfrac{2 \sqrt{\beta_{2k}}}{\alpha_{3k} + \beta_{3k}} \|x_{i+1} - x^\star\|_2 \cdot \left\|\varepsilon \right\|_2 \Rightarrow \\
\|x_{i+1} - x^\star\|_2 &\leq \tfrac{2(\beta_{3k} - \alpha_{3k})}{\alpha_{3k} + \beta_{3k}} \cdot \left\| u_i - x^\star \right\|_2 + \tfrac{2 \sqrt{\beta_{2k}}}{\alpha_{3k} + \beta_{3k}} \left\|\varepsilon \right\|_2
\end{align*}
Defining $\kappa = \frac{\beta_{3k}}{\alpha_{3k}}$ as the condition number, we get:
\begin{align*}
\|x_{i+1} - x^\star\|_2 &\leq \tfrac{2(\kappa - 1)}{\kappa + 1} \cdot \left\| u_i - x^\star \right\|_2 + \tfrac{2 \sqrt{\beta_{2k}}}{\alpha_{3k} + \beta_{3k}} \left\|\varepsilon \right\|_2
\end{align*}

Focusing on the norm term on RHS, we observe:
\begin{align*}
\|u_i - x^\star\|_2 &= \|x_i + \tau_i \left(x_i - x_{i-1}\right) - x^\star\|_2 = \|x_i + \tau_i \left(x_i - x_{i-1}\right) - (1 - \tau_i + \tau_i) x^\star\|_2\\
						&= \|(1 + \tau_i)(x_i - x^\star) + \tau_i(x^\star - x_{i-1})\|_2 \\
						&\leq |1 + \tau| \cdot \|x_i - x^\star\|_2 + |\tau| \cdot \|x_{i-1} - x^\star\|_2
\end{align*} 
where in the last inequality we used the triangle inequality and the fact that $\tau_i = \tau$, for all $i$.
Substituting this in our main inequality, we get:
\begin{align*}
\|x_{i+1} - x^\star\|_2 &\leq \tfrac{2(\kappa - 1)}{\kappa + 1} \cdot \left( |1 + \tau| \cdot \|x_i - x^\star\|_2 + |\tau| \cdot \|x_{i-1} - x^\star\|_2 \right) + \tfrac{2 \sqrt{\beta_{2k}}}{\alpha_{3k} + \beta_{3k}} \left\|\varepsilon \right\|_2\\
							  &= \tfrac{2(\kappa - 1)}{\kappa + 1} \cdot |1 + \tau| \cdot \|x_i - x^\star\|_2 + \tfrac{2(\kappa - 1)}{\kappa + 1} \cdot |\tau| \cdot \|x_{i-1} - x^\star\|_2 + \tfrac{2 \sqrt{\beta_{2k}}}{\alpha_{3k} + \beta_{3k}} \left\|\varepsilon \right\|_2
\end{align*}
Define $z(i) = \|x_i - x^\star\|_2$; this leads to the following second-order linear system:
\begin{align*}
z(i+1) \leq \tfrac{2(\kappa - 1)}{\kappa + 1} \cdot |1 + \tau| \cdot  z(i) + \tfrac{2(\kappa - 1)}{\kappa + 1} \cdot  |\tau| \cdot  z(i-1) + \tfrac{2 \sqrt{\beta_{2k}}}{\alpha_{3k} + \beta_{3k}} \left\|\varepsilon \right\|_2.
\end{align*}
We can convert this second-order linear system into a two-dimensional first-order system, where the variables of the linear system are multi-dimensional. 
To do this, we define a new state variable $w(i)$:
\begin{align*}
w(i) := z(i+1)
\end{align*} 
and thus $w(i+1) = z(i+2)$.
Using $w(i)$, we define the following 2-dimensional, first-order system:
\begin{align*}
\left\{
	\begin{array}{ll}
		w(i) - \tfrac{2(\kappa - 1)}{\kappa + 1} \cdot |1 + \tau| \cdot  w(i-1) - \tfrac{2(\kappa - 1)}{\kappa + 1} \cdot  |\tau| \cdot  z(i-1) - \tfrac{2 \sqrt{\beta_{2k}}}{\alpha_{3k} + \beta_{3k}} \left\|\varepsilon \right\|_2\leq 0, \\
		z(i) \leq w(i-1).
	\end{array}
\right.
\end{align*}
This further characterizes the evolution of two state variables, $\{ w(i), z(i) \}$:
\begin{align*}
\begin{bmatrix}
w(i) \\ z(i)
\end{bmatrix} &\leq
\begin{bmatrix}
\tfrac{2(\kappa - 1)}{\kappa + 1} \cdot |1 + \tau| & \tfrac{2(\kappa - 1)}{\kappa + 1} \cdot  |\tau| \\
1 & 0
\end{bmatrix} \cdot 
\begin{bmatrix}
w(i-1) \\
z(i-1)
\end{bmatrix} + 
\begin{bmatrix}
1 \\ 0
\end{bmatrix} \tfrac{2 \sqrt{\beta_{2k}}}{\alpha_{3k} + \beta_{3k}} \left\|\varepsilon \right\|_2 \Rightarrow \\
\begin{bmatrix}
\|x_{i+1} - x^\star\|_2 \\ \|x_i - x^\star\|_2
\end{bmatrix} &\leq \begin{bmatrix}
\tfrac{2(\kappa - 1)}{\kappa + 1} \cdot |1 + \tau| & \tfrac{2(\kappa - 1)}{\kappa + 1} \cdot  |\tau| \\
1 & 0
\end{bmatrix} \cdot 
\begin{bmatrix}
\|x_i - x^\star\|_2 \\
\|x_{i-1} - x^\star\|_2
\end{bmatrix} + 
\begin{bmatrix}
1 \\ 0
\end{bmatrix} \tfrac{2 \sqrt{\beta_{2k}}}{\alpha_{3k} + \beta_{3k}} \left\|\varepsilon \right\|_2,
\end{align*}
where in the last inequality we use the definitions $z(i) = \|x_i - x^\star\|_2$ and $w(i) = z(i+1)$.
Observe that the contraction matrix has non-negative values.
This completes the proof.

\section{Implementation details}{\label{sec:practice}}

So far, we have showed the theoretical performance of our algorithm, where several hyper-parameters are assumed known.
Here, we discuss a series of practical matters that arise in the implementation of our algorithm.

\subsection{Setting structure hyper-parameter $k$}

Given structure $\mathcal{A}$, one needs to set the ``succinctness" level $k$, as input to Algorithm \ref{algo:alps}.
Before we describe practical solutions on how to set up this value, we first note that selecting $k$ is often intuitively easier than setting the regularization parameter in convexified versions of \eqref{eq:problem}.
For instance, in vector sparsity settings for linear systems, where the Lasso criterion is used: $\argmin_{x \in \R^{n}} \left\{ \tfrac{1}{2} \|b - \Phi x\|_2^2 + \lambda \cdot \|x\|_1 \right\}$, selecting $\lambda > 0$ is a non-trivial task: 
arbitrary values of $\lambda$ lead to different $k$, and it is not obvious what is the ``sweet range" of $\lambda$ values for a particular sparsity level.
From that perspective, using $k$ leads to more interpretable results.

However, even in the strict discrete case, selecting $k$ could be considered art for many cases.
Here, we propose two ways for such selection: $(i)$ via cross-validation, and $(ii)$ by using phase transition plots. 
Regarding cross-validation, this is a well-known technique and we will skip the details; 
we note that we used cross-validation, as in \cite{jain2016structured}, to select the group sparsity parameters for the tumor classification problem in Subsection \ref{subsec:tumor}.

A different way to select $k$ comes from the recovery limits of the optimization scheme at hand: 
For simplicity, consider the least-squares objective with sparsity constraints. 
\cite{donoho2005neighborliness} describes mathematically the phase transition behavior of the basis pursuit algorithm \cite{chen2001atomic} for that problem; see Figure 1 in \cite{donoho2005neighborliness}. 
Moving along the phase transition line, triplets of $(m, n, k)$ can be extracted; for fixed $m$ and $n$, this results into a unique value for $k$. 
We used this ``overshooted" value in Algorithm \ref{algo:alps} at our experiments for sparse linear regression; see Figure \ref{fig:linreg1} in Subsection \ref{subsec:linreg}.
Our results show that, even using this procedure as a heuristic, it results into an automatic way of setting $k$, that leads to the correct solution.
Similar phase transition plots can be extracted, even experimentally, for other structures $\mathcal{A}$; see \emph{e.g.} \cite{recht2010guaranteed} for the case of low rankness.

\subsection{Selecting $\tau$ and step size in practice}

The analysis in Section \ref{sec:theory} suggests using $\tau$ within the range:
$|\tau| \leq \tfrac{(1 - \varphi \cdot \xi^{1/2})}{\varphi \cdot \xi^{1/2}}$. 
In order to compute the end points of this range, we require a good approximation of $\xi := 1 - \sfrac{\alpha}{\beta}$, where $\alpha$ and $\beta$ are the restricted strong convexity and smoothness parameters of $f$, respectively.

In general, computing $\alpha$ and $\beta$ in practice is an NP-hard task\footnote{To see this, in the sparse linear regression setting, there is a connection between $\alpha, \beta$ and the so-called restricted isometry constants \cite{tillmann2014computational}. It is well known that the latter is NP-hard to compute.}
A practical rule is to use a constant momentum term, like $\tau = \sfrac{1}{4}$: we observed that this value worked well in our experiments.\footnote{We did not perform binary search for this selection---we conjecture that better $\tau$ values in our results could result into even more significant gains w.r.t. convergence rates.}

In some cases, one can approximate $\alpha$ and $\beta$ with the smallest and largest eigenvalue of the hessian $\nabla^2 f(\cdot)$; \emph{e.g.}, in the linear regression setting, the hessian matrix is constant across all points, since $\nabla^2 f(\cdot) = \Phi^\top \Phi$.
This is the strategy followed in Subsection \ref{subsec:linreg} to approximate $\beta$ with $\widehat{\beta} := \lambda_{\max}(\Phi^\top \Phi)$. 
We also used $\sfrac{1}{\widehat{\beta}}$ as the step size.
Moreover, for such simplified but frequent cases, one can efficiently select step size and momentum parameter in closed form, via line search; see \cite{kyrillidis2011recipes}. 

In the cases where $\tau$ results into ``ripples" in the function values, we conjecture that the adaptive strategies in \cite{o2015adaptive} can be used to accelerate convergence. This solution is left for future work.

Apart from these strategies, common solutions for approximate $\alpha$ and $\beta$ include backtracking (update approximates of $\alpha$ and $\beta$ with per-iteration estimates, when local behavior demands it) \cite{becker2011templates, beck2009fast}, Armijo-style search tests, or customized tests (like eq. (5.7) in \cite{becker2011templates}).
However, since estimating the $\alpha$ parameter is a much harder task \cite{o2015adaptive, becker2011templates, nesterov2013gradient}, one can set $\tau$ as constant and focus on approximating $\beta$ for the step size selection.



%

\subsection{Inexact projections $\Pi_{k, \mathcal{A}}(\cdot)$}

Part of our theory relies on the fact that the projection operator $\Pi_{k, \mathcal{A}}(\cdot)$ is exact.
We conjecture that our theory can be extended to \emph{approximate} projection operators, along the lines of 
\cite{blumensath2011sampling, shah2011iterative, kyrillidis2012combinatorial, hegde2015approximation}. We present some experiments that perform approximate projections for overlapping group sparse structures and show AccIHT can perform well. 
We leave the theoretical analysis as potential future work.

\section{Proof of Lemma \ref{lemma:nth-power2}}{\label{subsec:n-thpowerx}}

First, we state the following simple theorem; the proof is omitted.
\begin{lemma}{\label{lemma:nth-power1}}
Let $A := \begin{bmatrix}
\gamma & \delta \\
\epsilon & \zeta
\end{bmatrix}$ be a $2 \times 2$ matrix with distinct eigevalues $\lambda_1, ~\lambda_2$. 
Then, $A$ has eigenvalues such that: 
\begin{align*}
\lambda_{1,2} = \tfrac{\omega}{2} \pm \sqrt{\tfrac{\omega^2}{4} - \Delta},
\end{align*}
where $\omega := \gamma + \zeta$ and $\Delta = \gamma \cdot \zeta - \delta \cdot \epsilon$.
\end{lemma}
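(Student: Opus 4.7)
The plan is to derive the eigenvalues directly from the characteristic polynomial of $A$, since the statement is essentially the standard quadratic formula applied to a $2 \times 2$ matrix. Concretely, I would begin from the defining identity $\det(A - \lambda I) = 0$, expand the determinant of the matrix
\[
A - \lambda I = \begin{bmatrix} \gamma - \lambda & \delta \\ \epsilon & \zeta - \lambda \end{bmatrix},
\]
and obtain $(\gamma - \lambda)(\zeta - \lambda) - \delta \epsilon = 0$. Distributing and collecting terms gives the characteristic polynomial $\lambda^2 - (\gamma + \zeta)\lambda + (\gamma \zeta - \delta \epsilon) = 0$, which, using the shorthand $\omega := \gamma + \zeta$ and $\Delta := \gamma \zeta - \delta \epsilon$, becomes $\lambda^2 - \omega \lambda + \Delta = 0$.

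Next, I would simply apply the quadratic formula to this polynomial, yielding
\[
\lambda_{1,2} = \frac{\omega \pm \sqrt{\omega^2 - 4\Delta}}{2} = \frac{\omega}{2} \pm \sqrt{\frac{\omega^2}{4} - \Delta},
\]
which matches the claimed expression. The assumption that $\lambda_1 \neq \lambda_2$ in the lemma amounts to the discriminant condition $\omega^2 - 4\Delta \neq 0$, and under this condition the two roots are the two distinct eigenvalues guaranteed by the fundamental theorem of algebra applied to the (real or complex) quadratic.

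There is no real obstacle here, since the statement is elementary linear algebra. The only mild subtlety worth flagging explicitly is that the formula is stated without restricting to real eigenvalues, so one should note that the identity is valid over $\mathbb{C}$ (with the square root interpreted appropriately), and that distinctness of eigenvalues is equivalent to a nonzero discriminant. This single observation justifies calling $\lambda_1$ and $\lambda_2$ the ``two'' eigenvalues in the first place, and closes the proof.
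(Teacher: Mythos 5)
Your proof is correct: expanding $\det(A-\lambda I)=0$ into $\lambda^2-\omega\lambda+\Delta=0$ and applying the quadratic formula is exactly the standard argument, and your remark that distinctness of $\lambda_1,\lambda_2$ corresponds to a nonzero discriminant is the right observation. The paper omits the proof of this lemma as elementary, and what you wrote is precisely the derivation that omission presumes.
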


We will consider two cases: $(i)$ when $\lambda_1 \neq \lambda_2$ and, $(ii)$ when $\lambda_1 = \lambda_2$.
\subsubsection{$\lambda_1 \neq \lambda_2$}
Some properties regarding these two eigenvalues are the following:
\begin{align*}
\lambda_1 + \lambda_2 = \left(\tfrac{\omega}{2} + \sqrt{\tfrac{\omega^2}{4} - \Delta}\right) + \left(\tfrac{\omega}{2} - \sqrt{\tfrac{\omega^2}{4} - \Delta}\right) = \omega
\end{align*} 
and
\begin{align*}
\lambda_1 \lambda_2 = \left(\tfrac{\omega}{2} + \sqrt{\tfrac{\omega^2}{4} - \Delta}\right) \cdot \left(\tfrac{\omega}{2} - \sqrt{\tfrac{\omega^2}{4} - \Delta}\right) = \tfrac{\omega^2}{4} - \tfrac{\omega^2}{4} + \Delta = \Delta
\end{align*}

Let us define:
\begin{align*}
B_1 &= -(A - \lambda_1 \cdot I) \\
B_2 &= (A - \lambda_2 \cdot I)
\end{align*}
Observe that:
\begin{align*}
\lambda_2 \cdot B_1 + \lambda_1 \cdot B_2 &= - \lambda_2 \left(A - \lambda_1 \cdot I \right) + \lambda_1 \left(A - \lambda_2 \cdot I\right) \\
																&= -\lambda_2 A + \lambda_1 \lambda_2 I + \lambda_1A - \lambda_1 \lambda_2 I \\ 
																&= (\lambda_1 - \lambda_2)A
\end{align*}
which, under the assumption that $\lambda_1 \neq \lambda_2$, leads to:
\begin{align*}
A = \tfrac{\lambda_2}{\lambda_1 - \lambda_2} B_1 + \tfrac{\lambda_1}{\lambda_1 - \lambda_2} B_2
\end{align*}

Furthermore, we observe:
\begin{align*}
B_1 \cdot B_1 &= (A - \lambda_1 \cdot I) \cdot (A - \lambda_1 \cdot I) \\
				      &= A^2 + \lambda_1^2 \cdot I - 2\lambda_1 A
\end{align*}
By the Calley-Hamilton Theorem on $2 \times 2$ matrices, we know that the characteristic polynomial $p(A) = A^2 - \text{Tr}(A) \cdot A - \texttt{det}(A) \cdot I = 0$, and thus,
\begin{align*}
A^2 &= (\gamma + \zeta) \cdot A - \left(\gamma \cdot \zeta - \delta \cdot \epsilon \right) \cdot I \Rightarrow \\
       &= \omega \cdot A - \Delta \cdot I
\end{align*}
Using the $\omega$ and $\Delta$ characterizations above w.r.t. the eigenvalues $\lambda_{1,2}$, we have:
\begin{align*}
A^2 = (\lambda_1 + \lambda_2) \cdot A - \lambda_1 \lambda_2 I
\end{align*} 
and thus:
\begin{align*}
B_1 \cdot B_1 &= (\lambda_1 + \lambda_2) A - \lambda_1 \lambda_2 I + \lambda_1^2 I - 2\lambda_1 A \\
					 &= (\lambda_2 - \lambda_1) A - (\lambda_1 \lambda_2 - \lambda_1^2)\cdot I \\
					 &= (\lambda_2 - \lambda_1) \cdot (A - \lambda_1 I) \\
					 &= (\lambda_1 - \lambda_2) \cdot B_1
\end{align*}
Similarly, we observe that:
\begin{align*}
B_2 \cdot B_2 = \cdots = (\lambda_1 - \lambda_2) \cdot B_2
\end{align*}
On the other hand, the cross product $B_1 \cdot B_2 = 0$.
To see this:
\begin{align*}
B_1 \cdot B_2 &= -(A - \lambda_1 I) \cdot (A - \lambda_2 I) \\
					 &= -A^2 - \lambda_1 \lambda_2 I + \lambda_2 A + \lambda_1 A \\
					 &= -A^2 + (\lambda_1 + \lambda_2) A - \lambda_1 \lambda_2 I = 0
\end{align*}
by the Calley-Hamilton Theorem.
Given the above, we have:
\begin{align*}
B_1^2 &= B_1 \cdot B_1 = (\lambda_1 - \lambda_2) \cdot B_1 \\
\vspace{0.2cm}
B_1^3 &= B_1^2 \cdot B_1 = (\lambda_1 - \lambda_2) \cdot B_1 = (\lambda_1 - \lambda_2)^2 \cdot B_1 \\
\vdots \\
B_1^i &= \cdots = (\lambda_1 - \lambda_2)^{i-1} B_1
\end{align*}
Similarly for $B_2$:
\begin{align*}
B_2^i = (\lambda_1 - \lambda_2)^{i-1} B_2
\end{align*} 
Getting back to the characterization of $A$ via $B_1$ and $B_2$, $A = \tfrac{\lambda_2}{\lambda_1 - \lambda_2} B_1 + \tfrac{\lambda_1}{\lambda_1 - \lambda_2} B_2$, and given that any cross product of $B_1 \cdot B_2 = 0$, it is easy to see that $A^i$ equals to:
\begin{align*}
A &= \left(\tfrac{\lambda_2}{\lambda_1 - \lambda_2}\right)^i \cdot B_1^i + \left(\tfrac{\lambda_1}{\lambda_1 - \lambda_2}\right)^i \cdot B_2^i \\
   &= \left(\tfrac{\lambda_2}{\lambda_1 - \lambda_2}\right)^i \cdot (\lambda_1 - \lambda_2)^{i-1} B_1 + \left(\tfrac{\lambda_1}{\lambda_1 - \lambda_2}\right)^i \cdot (\lambda_1 - \lambda_2)^{i-1} B_2 \\
   &= \tfrac{\lambda_2^i}{\lambda_1 - \lambda_2} B_1 + \tfrac{\lambda_1^i}{\lambda_1 - \lambda_2} B_2 \\
   &= \tfrac{\lambda_2^i}{\lambda_1 - \lambda_2} \cdot (-A + \lambda_1 I) + \tfrac{\lambda_1^i}{\lambda_1 - \lambda_2} \cdot (A - \lambda_2 I) \\
   &= \tfrac{\lambda_1^i - \lambda_2^i}{\lambda_1 - \lambda_2} \cdot A + \left(\lambda_1 \cdot \tfrac{\lambda_2^i}{\lambda_1 \lambda_2} - \lambda_2 \cdot \tfrac{\lambda_1^i}{\lambda_1 - \lambda_2} \right) \cdot I \\
   &= \tfrac{\lambda_1^i - \lambda_2^i}{\lambda_1 - \lambda_2} \cdot A - \lambda_1 \lambda_2 \cdot \tfrac{\lambda_1^{i-1} - \lambda_2^{i-1}}{\lambda_1 - \lambda_2} \cdot I
\end{align*}
where in the fourth equality we use the definitions of $B_1$ and $B_2$.

\subsubsection{$\lambda_1 = \lambda_2$}
In this case, let us denote for simplicity: $\lambda \equiv \lambda_1 = \lambda_2$. 
By the Calley-Hamilton Theorem, we have:
\begin{align*}
A^2 = 2\lambda \cdot A - \lambda^2 \cdot I \Longrightarrow (A - \lambda \cdot I)^2 = 0
\end{align*}
Let us denote $C = A - \lambda \cdot I$. 
From the derivation above, it holds:
\begin{align*}
C^2 &= (A - \lambda \cdot I)^2 = 0 \\
C^3 &= C^2 \cdot C = 0 \\
&\vdots \\
C^i &= C^{i-1} \cdot C = 0.
\end{align*} 
Thus, as long as $i \geq 2$, $C^i = 0$.
Focusing on the $i$-th power of $A$, we get:
\begin{align*}
A^i = \left(A + \lambda \cdot  - \lambda \cdot I \right)^i = \left( C + \lambda \cdot I \right)^i 
\end{align*}
By the multinomial theorem, the above lead to:
\begin{align*}
A^i = \sum_{|\theta| = i} {i \choose \theta} \left(C \cdot (\lambda \cdot I)\right)^\theta,
\end{align*} 
where $\theta = (\theta_1, \theta_2)$ and $\left(C \cdot (\lambda \cdot I)\right)^\theta = C^{\theta_1} \cdot \left(\lambda \cdot I \right)^{\theta_2}$, according to multi-indexes notations.
However, we know that only when $i < 2$, $C^i$ could be nonzero. 
This translates into keeping only two terms in the summation above:
\begin{align*}
A^i = \lambda^i \cdot I + i \cdot \lambda^{i-1} \cdot C = \lambda^i \cdot I + i \cdot \lambda^{i-1} \cdot (A - \lambda \cdot I)
\end{align*} 

\section{Non-increasing function values and momentum}{\label{sec:counter}}

\begin{wrapfigure}{r}{0.4\textwidth} 
\centering
\includegraphics[width=0.4\textwidth]{./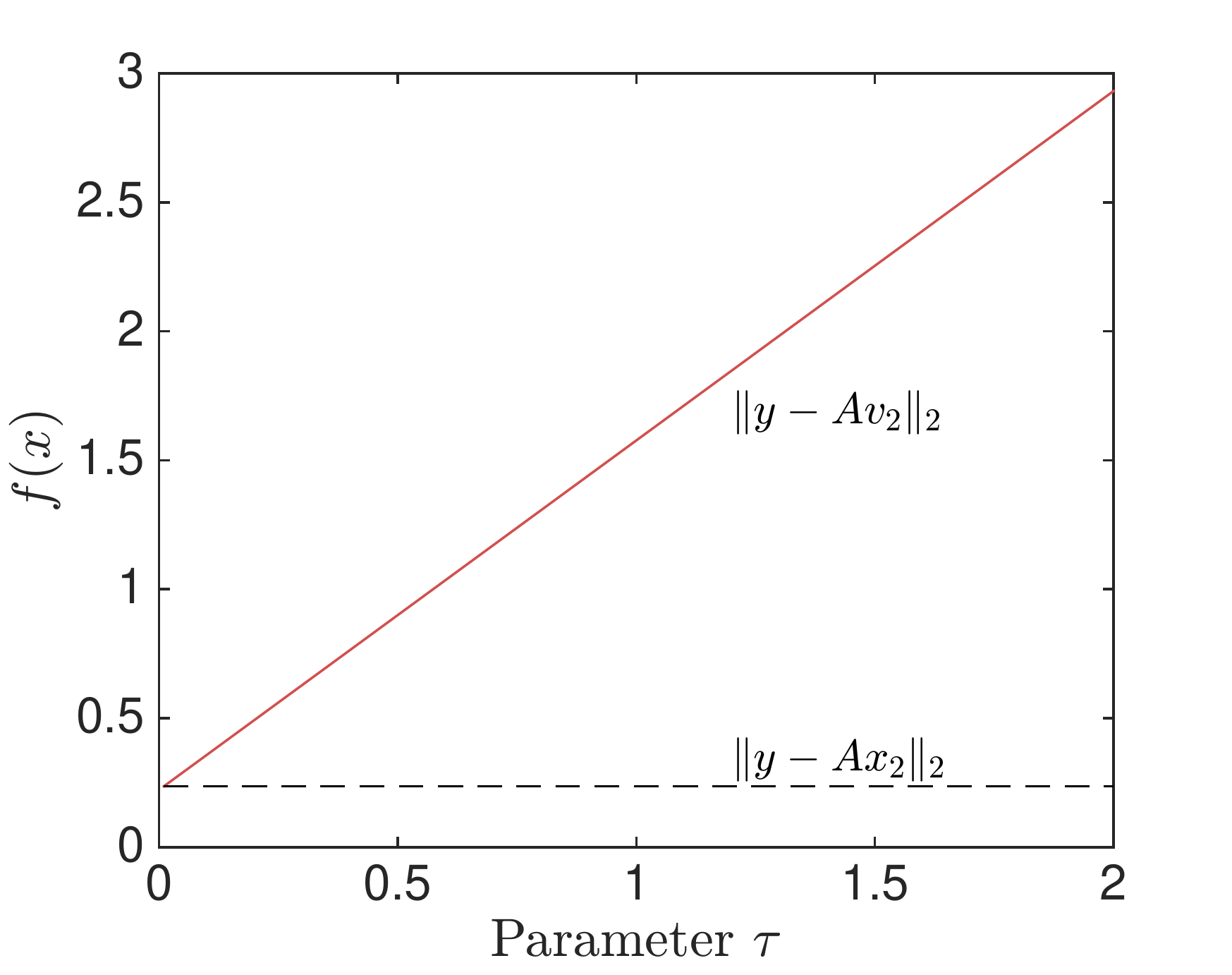} 
\caption{The use of momentum could be skipped in \cite{wei2015fast}.} 
\label{fig:counter}
\end{wrapfigure}
Here, we present a toy example for the analysis in \cite{wei2015fast}, where momentum term is not guaranteed to be used per step.
While this is not in general an issue, it might lead to repeatedly skipping the momentum term and, thus losing the acceleration.

Let us focus on the sparse linear regression problem, where the analysis in \cite{wei2015fast} applies. 
That means, $f(x) := \|b - \Phi x\|_2^2$, where $b \in \mathbb{R}^m$, $\Phi \in \mathbb{R}^{m \times n}$ and $x \in \mathbb{R}^n$. 
$b$ represents the set of observations, according to the linear model: $b = \Phi x^\star + \varepsilon$, where $x^\star$ is the sparse vector we look for and $\varepsilon$ is an additive noise term.
We assume that $m < n$ and, thus, regularization is needed in order to hope for a meaningful solution.

Similar to the algorithm considered in this paper, \cite{wei2015fast} performs a momentum step, where 
$v_{i+1} = x_{i+1} + \tau_{i+1} \cdot (x_{i+1} - x_i),$
where
\begin{align*}
\tau_{i+1} &= \argmin_{\tau} \| b - \Phi v_{i+1}\|_2^2 \\ &= \argmin_{\tau} \|b - \Phi \left(x_{i+1} + \tau \cdot (x_{i+1} - x_i)\right)\|_2^2
\end{align*}
The above minimization problem has a closed form solution.
However, the analysis in \cite{wei2015fast} assumes that $\|y - \Phi v_{i+1}\|_2 \leq \|y - \Phi x_{i+1}\|_2$, \emph{i.e.}, per iteration the momentum step does not increase the function value.

As we show in the toy example below, assuming positive momentum parameter $\tau \geq 0$, this assumption leads to no momentum term, when this is not satisfied. 
Consider the setting:
\begin{align*}
\underbrace{\begin{bmatrix}
    0.3870 \\
   -0.1514
\end{bmatrix}}_{=b} \approx 
\underbrace{\begin{bmatrix}
    0.3816  &  -0.2726  &  0.0077 \\
   -0.1598  &  1.9364  & -0.3908
\end{bmatrix}}_{=\Phi} \cdot 
\underbrace{\begin{bmatrix}
     1 \\
     0 \\
     0
\end{bmatrix}}_{=x^\star} + 
\underbrace{\begin{bmatrix}
0.0055 \\
    0.0084
\end{bmatrix}}_{=\varepsilon}
\end{align*} 
Further, assume that $x_1 = \begin{bmatrix} -1.7338 & 0 & 0 \end{bmatrix}^\top$ and $x_2 = \begin{bmatrix} 1.5415 & 0 & 0 \end{bmatrix}^\top$. 
Observe that $\|b - \Phi x_1\|_2 = 1.1328$ and $\|b - \Phi x_2\|_2 = 0.2224$, \emph{i.e.}, we are ``heading" towards the correct direction.
However, for any $\tau > 0$, $\|b - \Phi v_{2}\|_2$ increases; see Figure \ref{fig:counter}. 
This suggests that, unless there is an easy closed-form solution for $\tau$, setting $\tau$ differently does not guarantee that the function value $f(v_{i+1})$ will not increase, and the analysis in \cite{wei2015fast} does not apply.


\section{More experiments}\label{sec:moreexperiments}

\subsection{Group sparse, $\ell_2$-norm regularized logistic regression}{\label{subsec:tumor}}
For this task, we use the tumor classification on breast cancer dataset in \cite{jacob2009group} and test Algorithm \ref{algo:alps} on group sparsity model $\mathcal{A}$: we are interested in finding groups of genes that carry biological information such as regulation, involvement in the same chain of metabolic reactions, or protein-protein interaction.
We follow the procedure in \cite{jain2016structured}\footnote{\emph{I.e.}, 5-fold cross validation scheme to select parameters for group sparsity and $\ell_2$-norm regularization parameter - we use $\widehat{\beta}$ as in subsection~\ref{subsec:linreg}.} to extract misclassification rates and running times for FW variants, IHT and Algorithm \ref{algo:alps}. 
The groupings of genes are overlapping, which means that exact projections are hard to obtain. 
We apply the greedy projection algorithm of~\cite{jain2016structured} to obtain approximate projections. 
For cross-validating for the FW variants, we sweep over $\{10^{-3}, 10^{-2},10^{-1},1,10,100\}$ for regularization parameter, and $\{10^{-1},1,5,10, 50, 100\}$ for the scaling of the $\ell_1$ norm ball for sparsity inducing regularization. 
For IHT variants, we use the same set for the sweep for regularization parameter as we used for FW variants, and use $\{2, 5, 10, 15, 20, 50, 75, 100\}$ for sweep over the number of groups selected. 
After the best setting is selected for each algorithm, the time taken is calculated for time to convergence with the respective best parameters. 
The results are tabulated in Table~\ref{tab:tumor}. 
We note that this setup is out of the scope of our analysis, since our results assume exact projections. 
Nevertheless, we obtain competitive results suggesting that the acceleration scheme we propose for IHT warrants further study for the case of inexact projections. 

\begin{table}
\centering
\rowcolors{2}{white}{black!05!white}
\begin{tabular}{c c c c c}
	\toprule
	Algorithm & & Test error & & Time (sec) \\
	\cmidrule{1-1} \cmidrule{3-3} \cmidrule{5-5}
	FW \cite{lacoste2015global} & &  0.2938 & & 58.45 \\
	FW-Away \cite{lacoste2015global} & & 0.2938 & & 40.34 \\
	FW-Pair \cite{lacoste2015global} & & 0.2938 & &38.22 \\
    IHT \cite{jain2016structured} & & 0.2825 & & 5.24 \\
    \midrule
	Algorithm \ref{algo:alps} & & 0.2881 & & 3.45 \\
	\bottomrule
\end{tabular} \vspace{0.1cm}
\caption{Results for $\ell_2$-norm regularized logistic regression for tumor classification on the breast cancer dataset.}
\label{tab:tumor}
\end{table}

\subsection{Low rank image completion from subset of entries}

Here, we consider the case of matrix completion in low-rank, subsampled images. 
In particular, let $X^\star \in \R^{p \times n}$ be a low rank image; see Figures \ref{fig:image1_MC}-\ref{fig:image2_MC} for some ``compressed" low rank images in practice.
In the matrix completion setting, we observe only a subset of pixels in $X^\star$: $b = \mathcal{M}(X^\star)$ where $\mathcal{M}: \R^{p \times n} \rightarrow \R^m$ is the standard mask operation that down-samples $X^\star$ by selecting only $m \ll p \cdot n$ entries. 
The task is to recover $X^\star$ by minimizing $f(X) = \tfrac{1}{2} \|b - \mathcal{M}(X)\|_2^2$, under the low-rank model $\mathcal{A}$. 
According to \cite{negahban2012restricted}, such setting satisfies a slightly different restricted strong convexity/smoothness assumption; 
nevertheless, in Figures \ref{fig:MC_convrates}-\ref{fig:image2_MC} we demonstrate in practice that standard algorithms could still be applied: 
we compare accelerated IHT with plain IHT \cite{jain2010guaranteed}, an FW variant \cite{jaggi2013revisiting}, and the very recent matrix factorization techniques for low rank recovery (BFGD) \cite{bhojanapalli2016dropping, park2016finding}.
In our experiments, we use a line-search method for step size selection in accelerated IHT and IHT.
We observe the superior performance of accelerated IHT, compared to the rest of algorithms; it is notable to report that, for moderate problem sizes, non-factorized methods seem to have advantages in comparison to non-convex factorized methods, since low-rank projections (via SVD or other randomized algorithms) lead to significant savings in terms of number of iterations. 
Similar comparison results can be found in \cite{kyrillidis2014matrix, blanchard2015cgiht}.
Overall, it was obvious from our findings that Algorithm \ref{algo:alps} obtains the best performance among the methods considered.

\begin{figure*}[t!]
\centering
\includegraphics[width=0.48\textwidth, bb=0 0 500 500]{./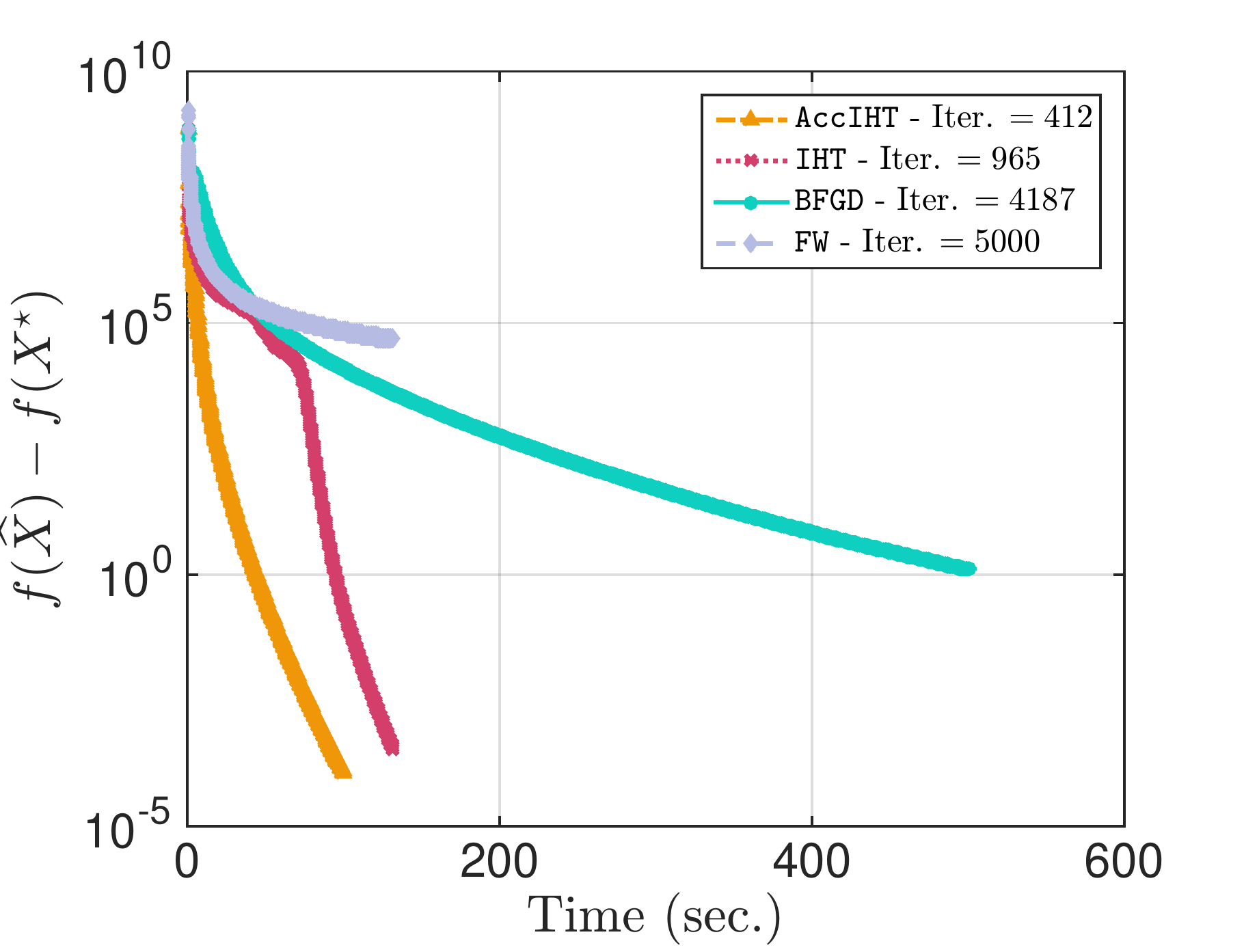} 
\includegraphics[width=0.48\textwidth, bb=0 0 500 500]{./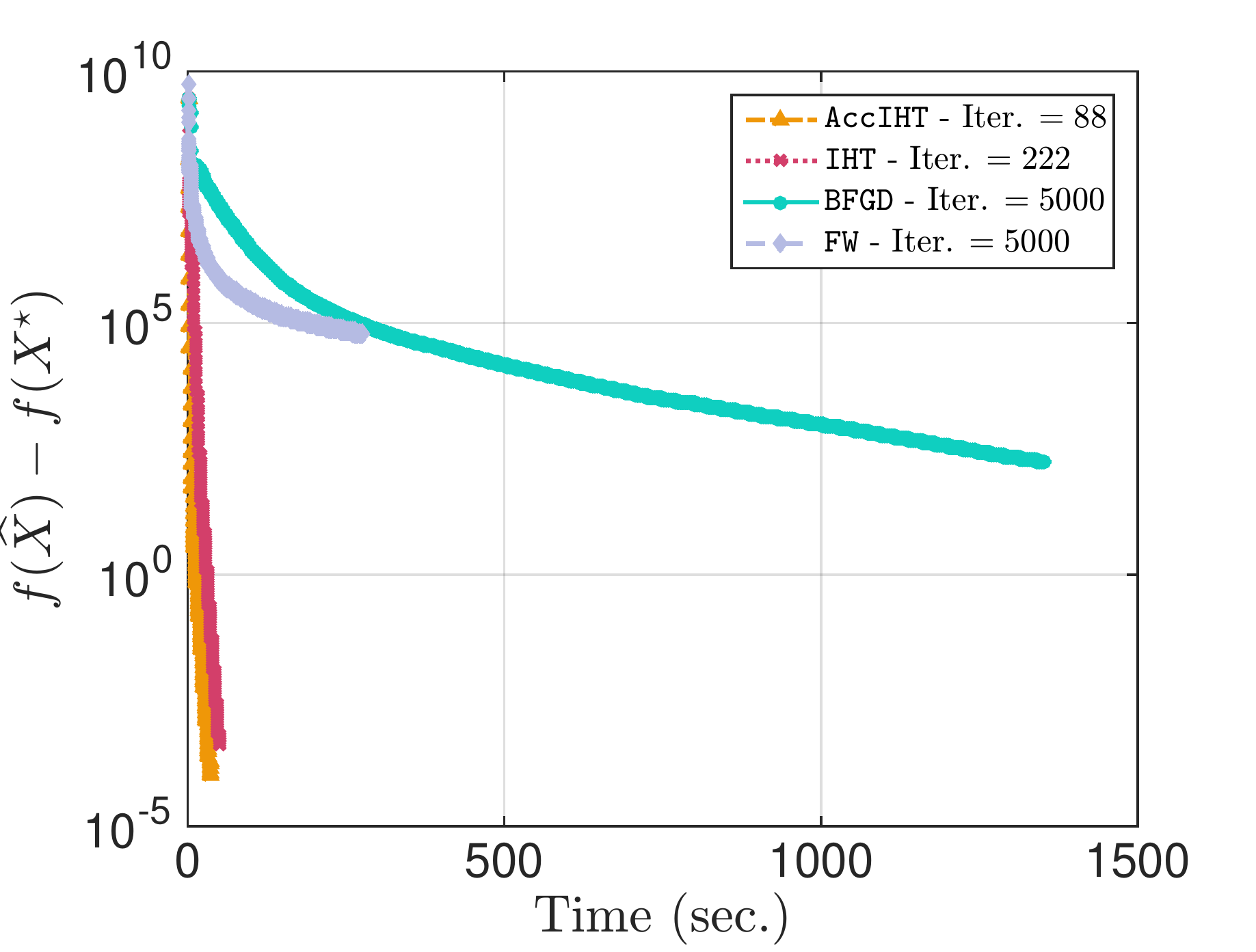} 
\caption{Time spent vs. function values gap $f(\widehat{x}) - f(x^\star)$. Left plot corresponds to the ``bikes" image, while the right plot to the ``children" image.}
\label{fig:MC_convrates}
\end{figure*}

\begin{figure*}[t!]
\centering
\includegraphics[width=0.30\textwidth, bb=0 0 700 700]{./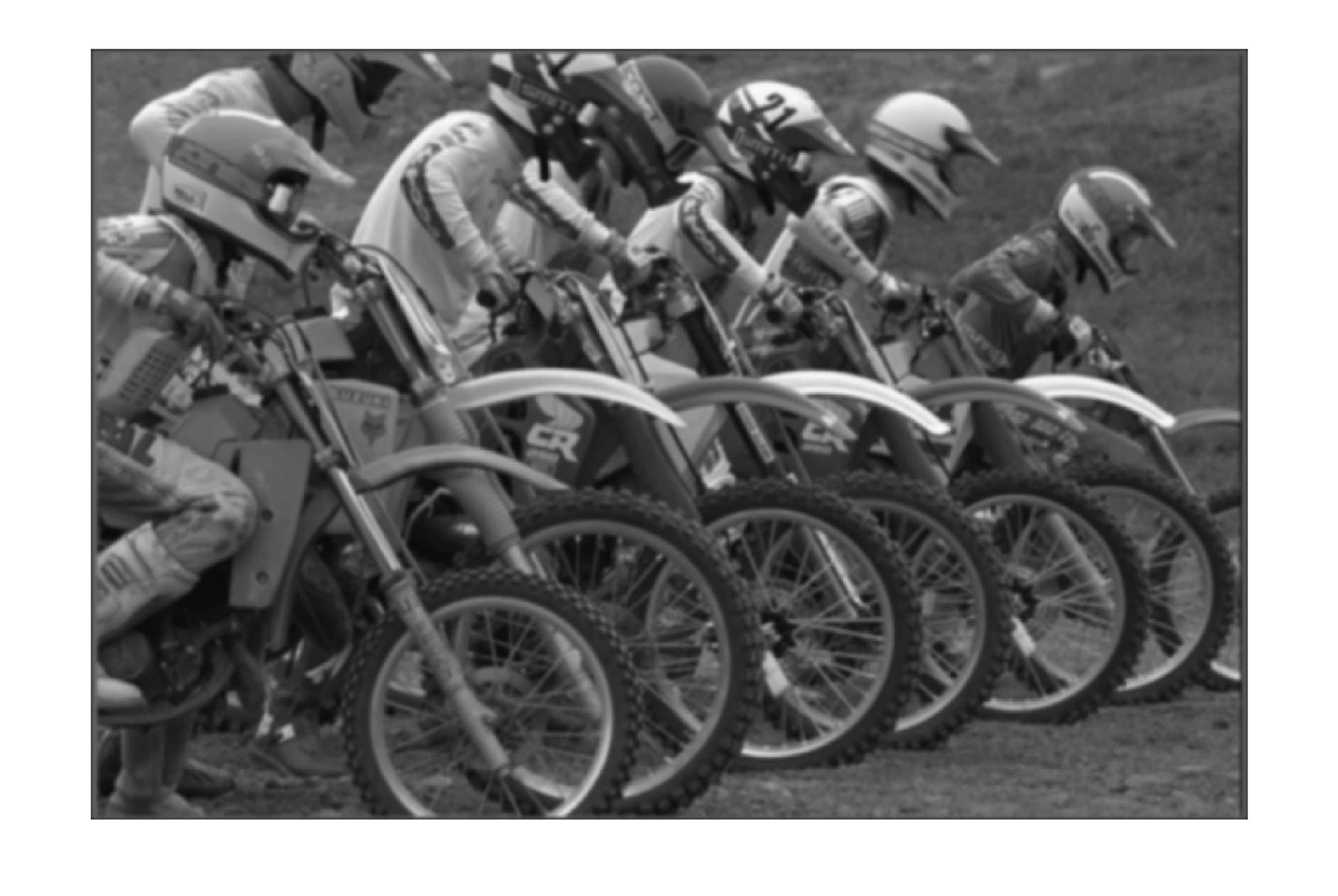} 
\includegraphics[width=0.30\textwidth, bb=0 0 700 700]{./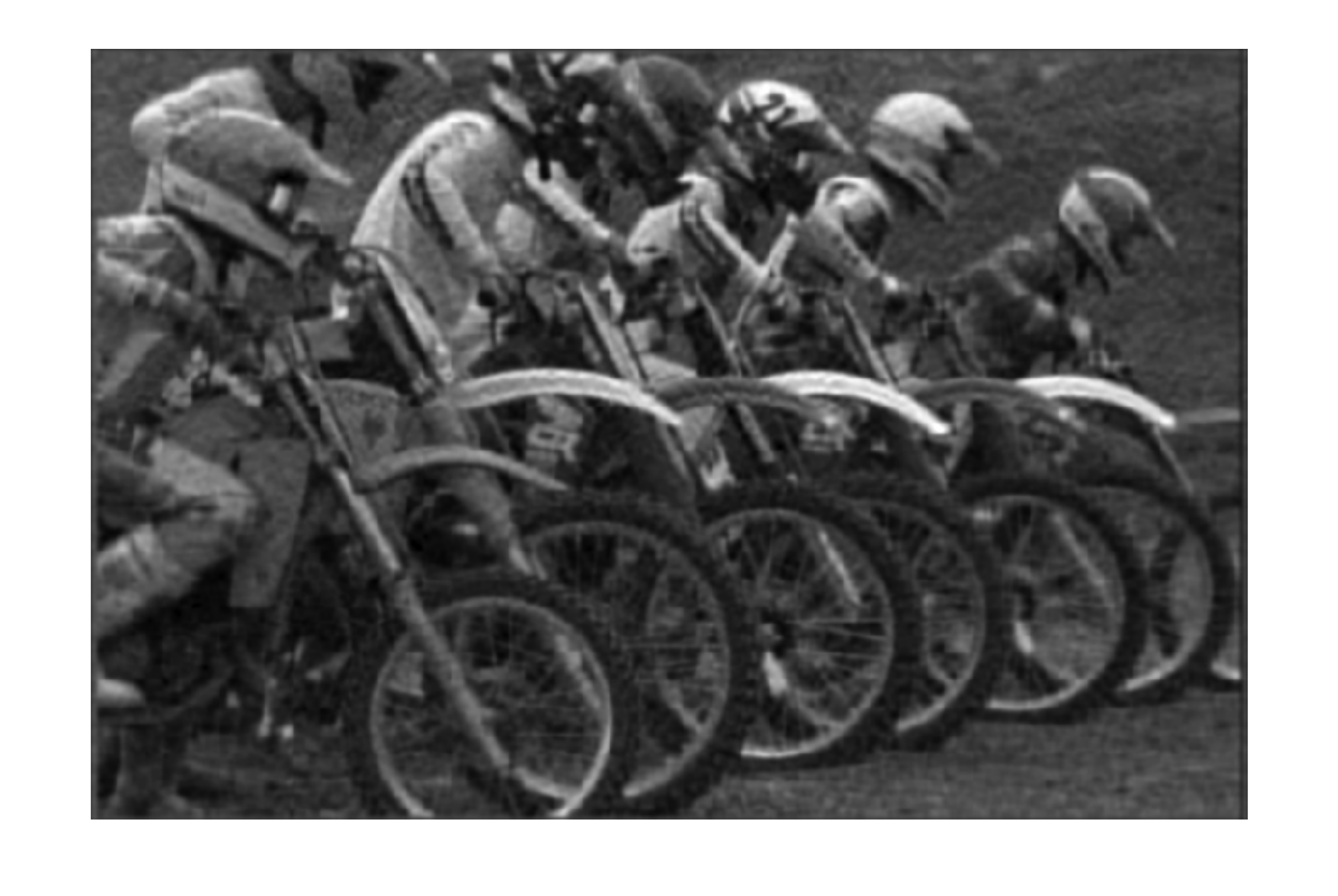} 
\includegraphics[width=0.30\textwidth, bb=0 0 700 700]{./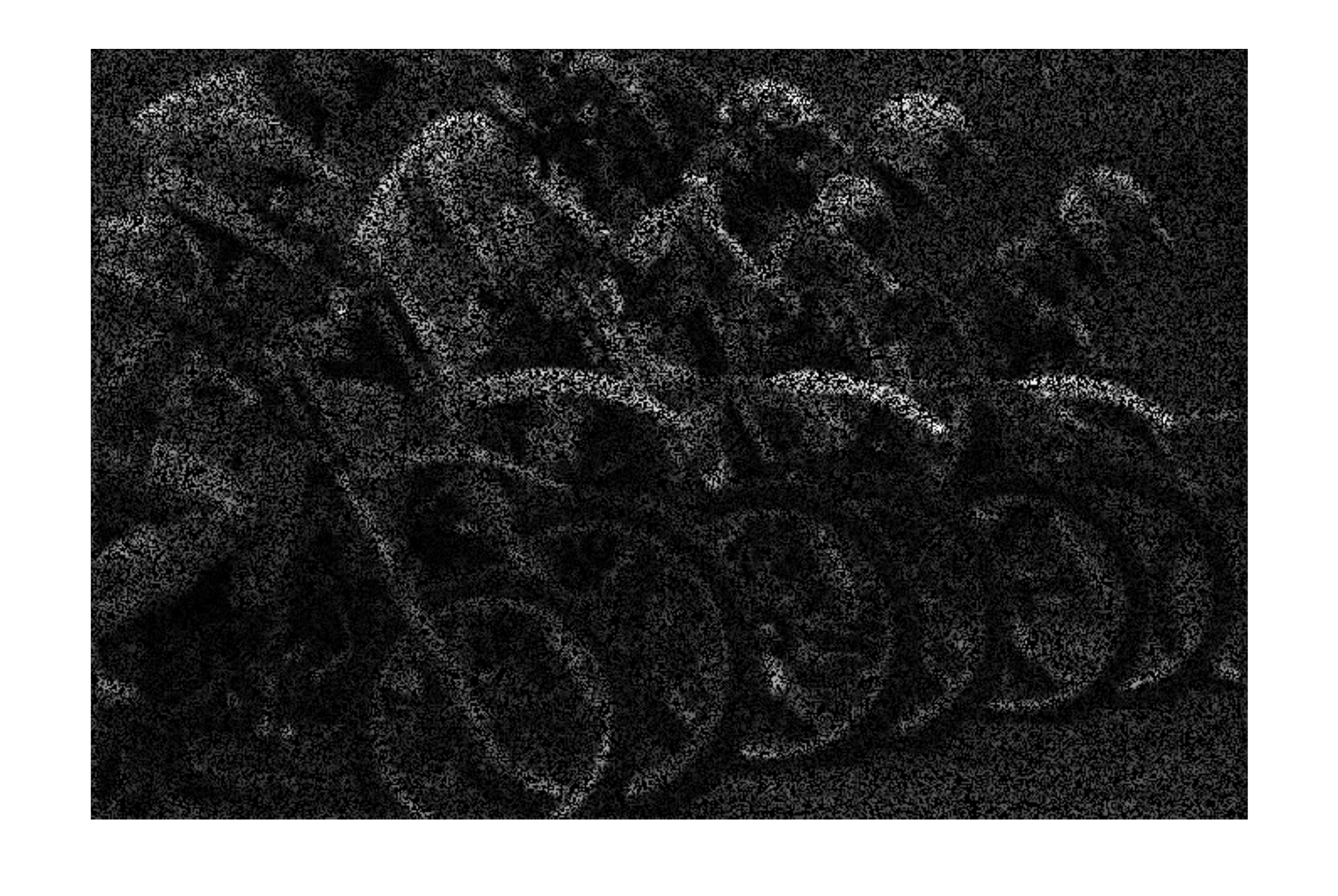} \\
\includegraphics[width=0.22\textwidth, bb=0 0 700 700]{./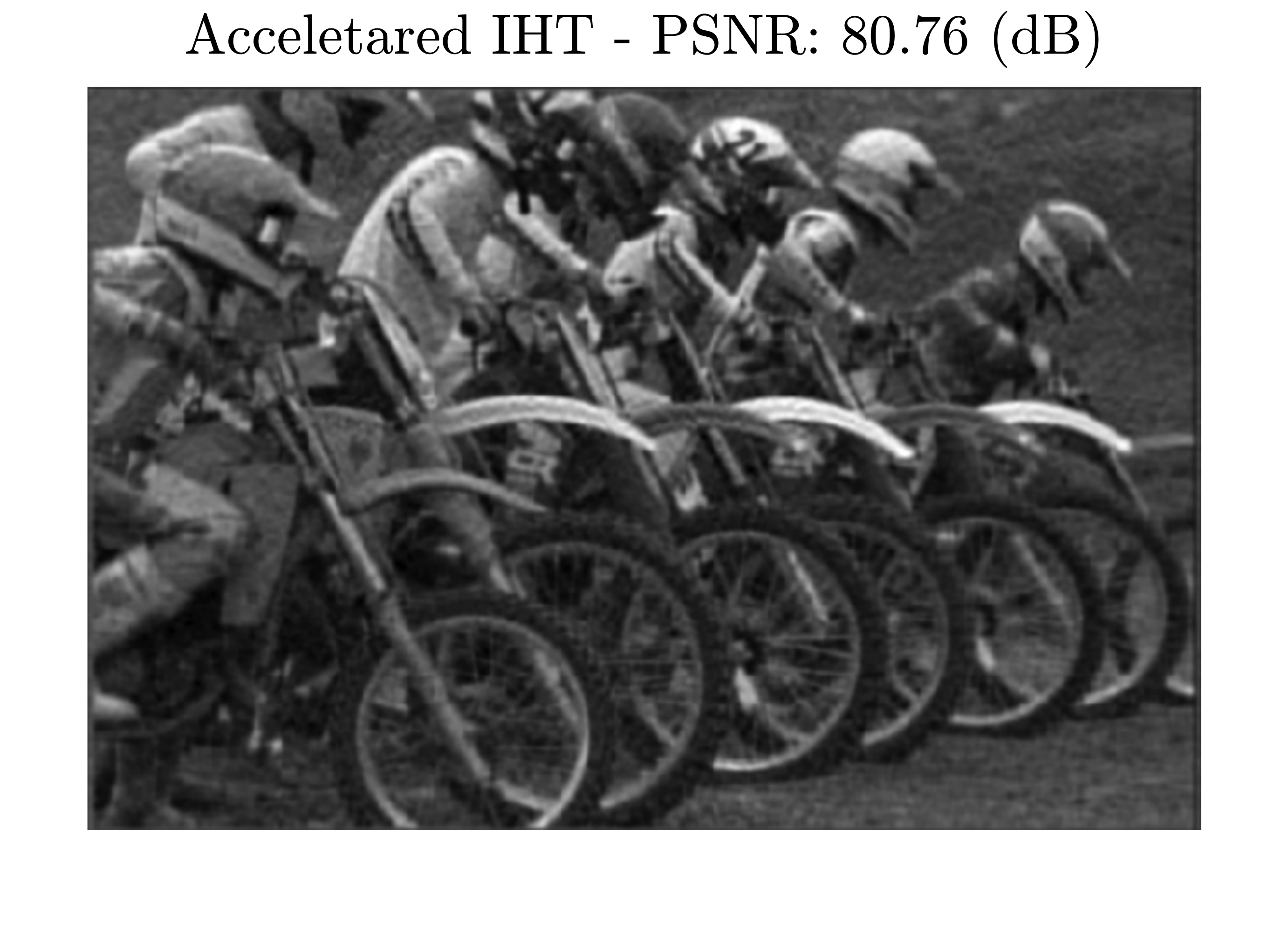} 
\includegraphics[width=0.22\textwidth, bb=0 0 700 700]{./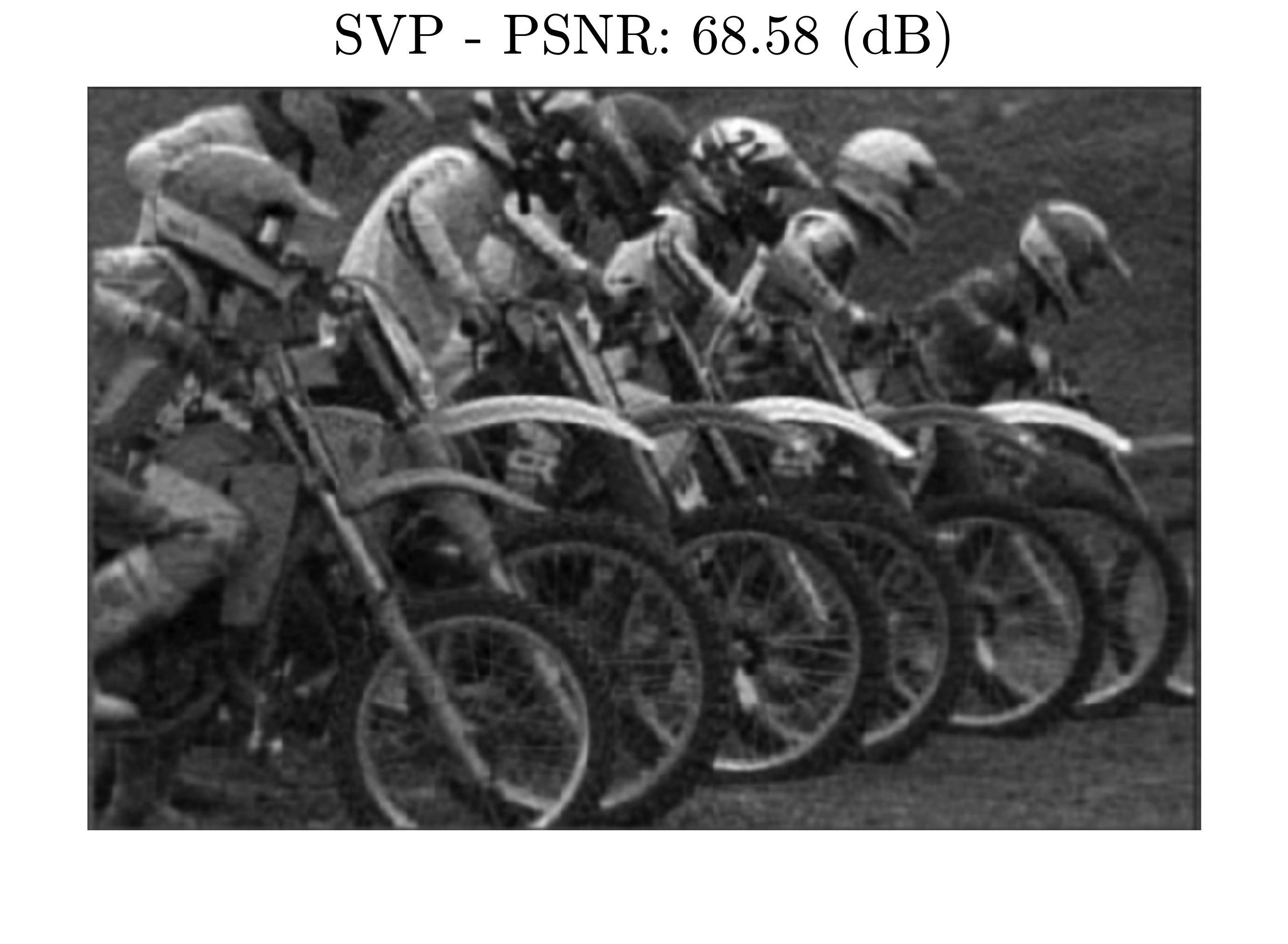} 
\includegraphics[width=0.22\textwidth, bb=0 0 700 700]{./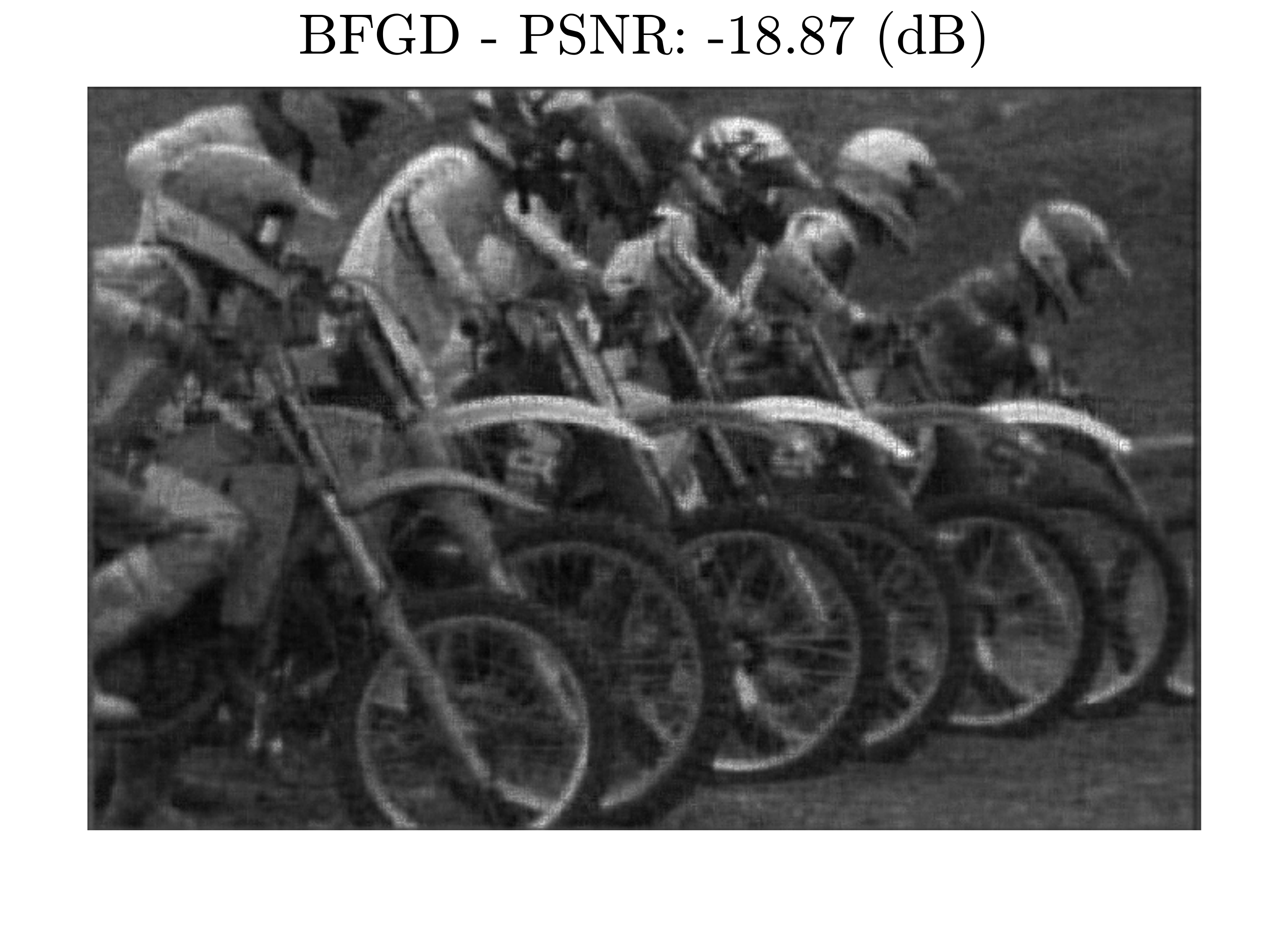} 
\includegraphics[width=0.22\textwidth, bb=0 0 700 700]{./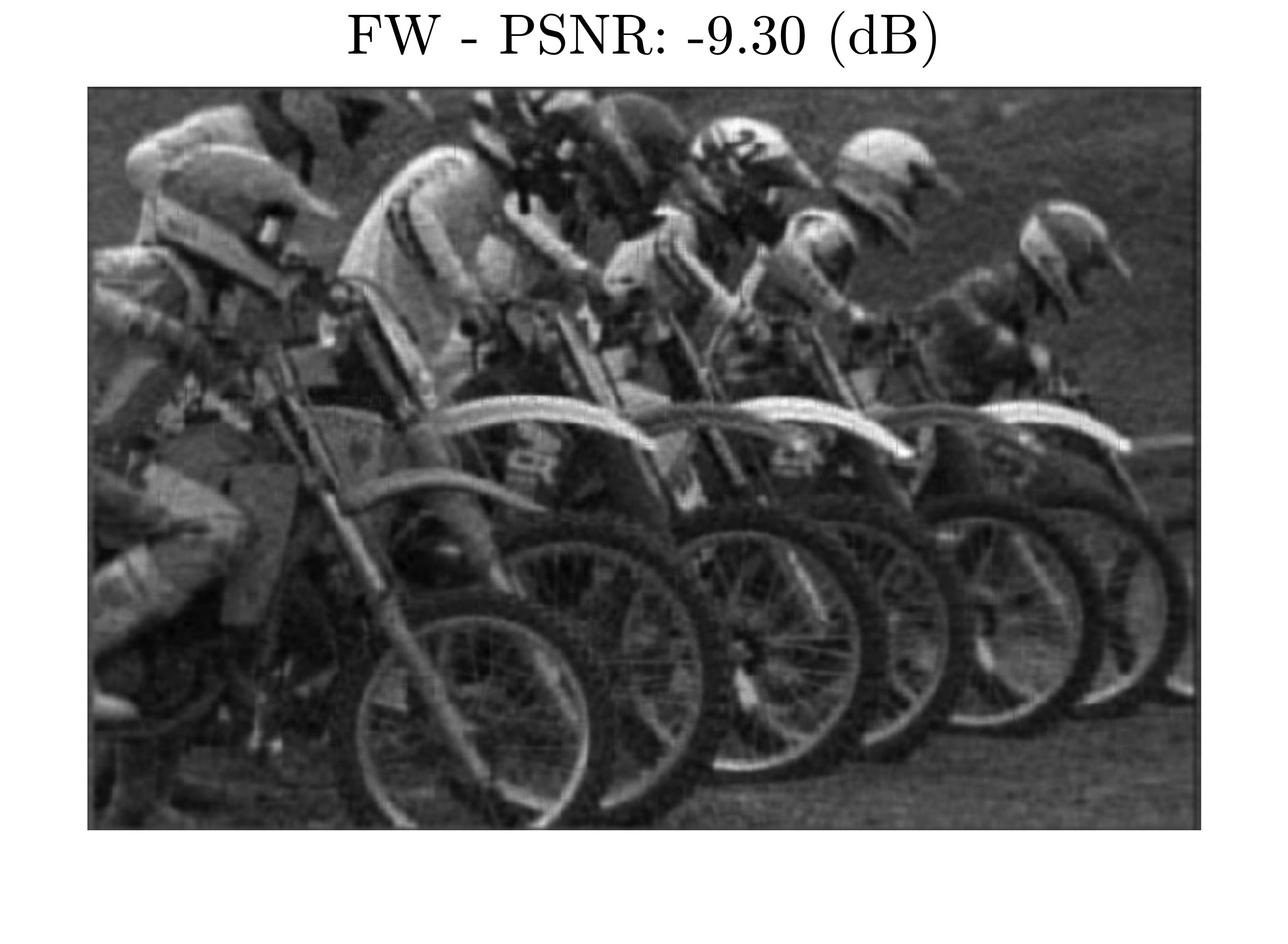}
\caption{Reconstruction performance in image denoising settings. The image size is $512 \times 768$ ($393,216$ pixels) and the approximation rank is preset to $r = 60$. We observe 35\% of the pixels of the true image. \textbf{Top row:} Original, low rank approximation, and observed image. \textbf{Bottom row:} Reconstructed images. }
\label{fig:image1_MC}
\end{figure*}

\begin{figure*}[b]
\centering
\includegraphics[width=0.30\textwidth, bb=0 0 700 700]{./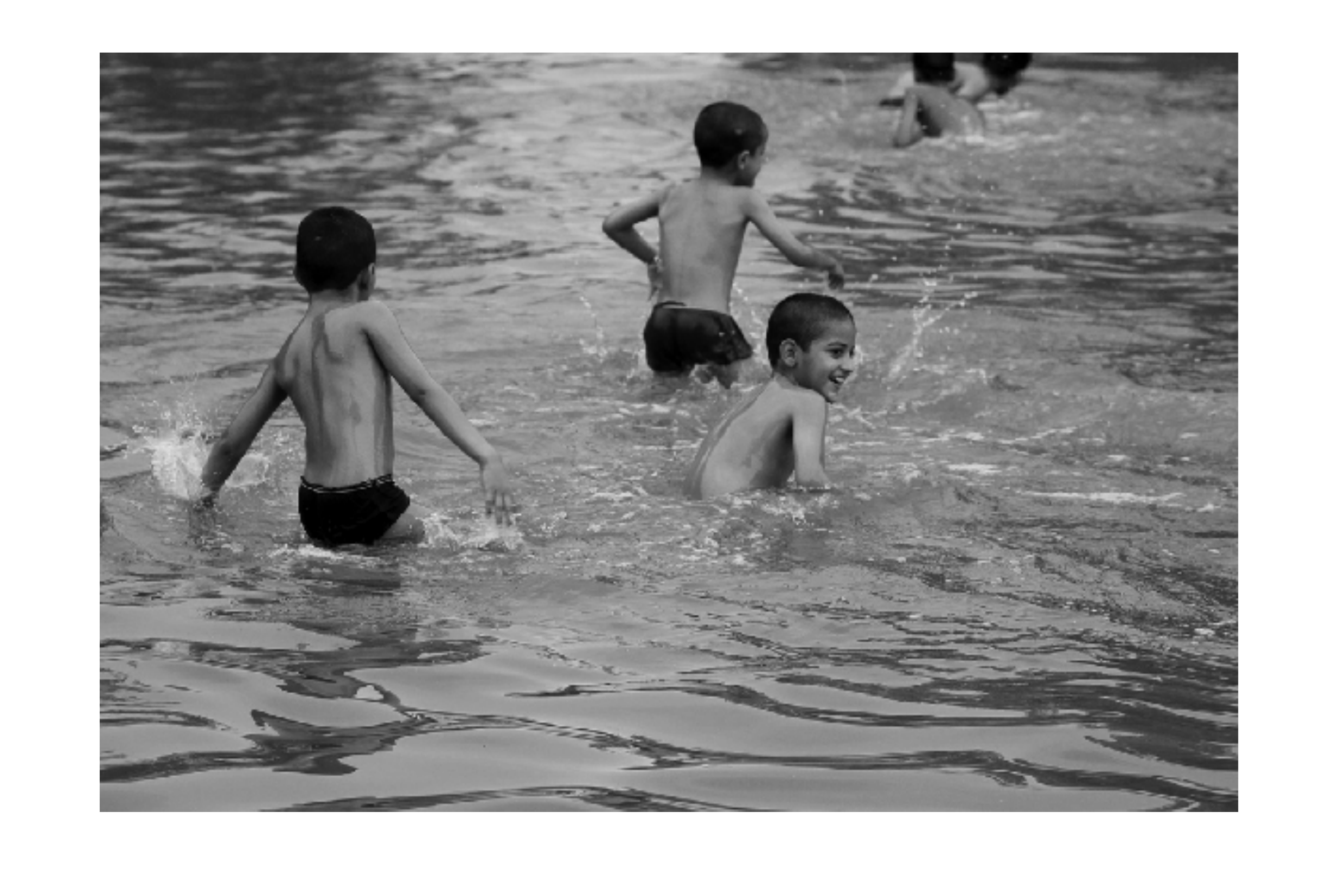} 
\includegraphics[width=0.30\textwidth, bb=0 0 700 700]{./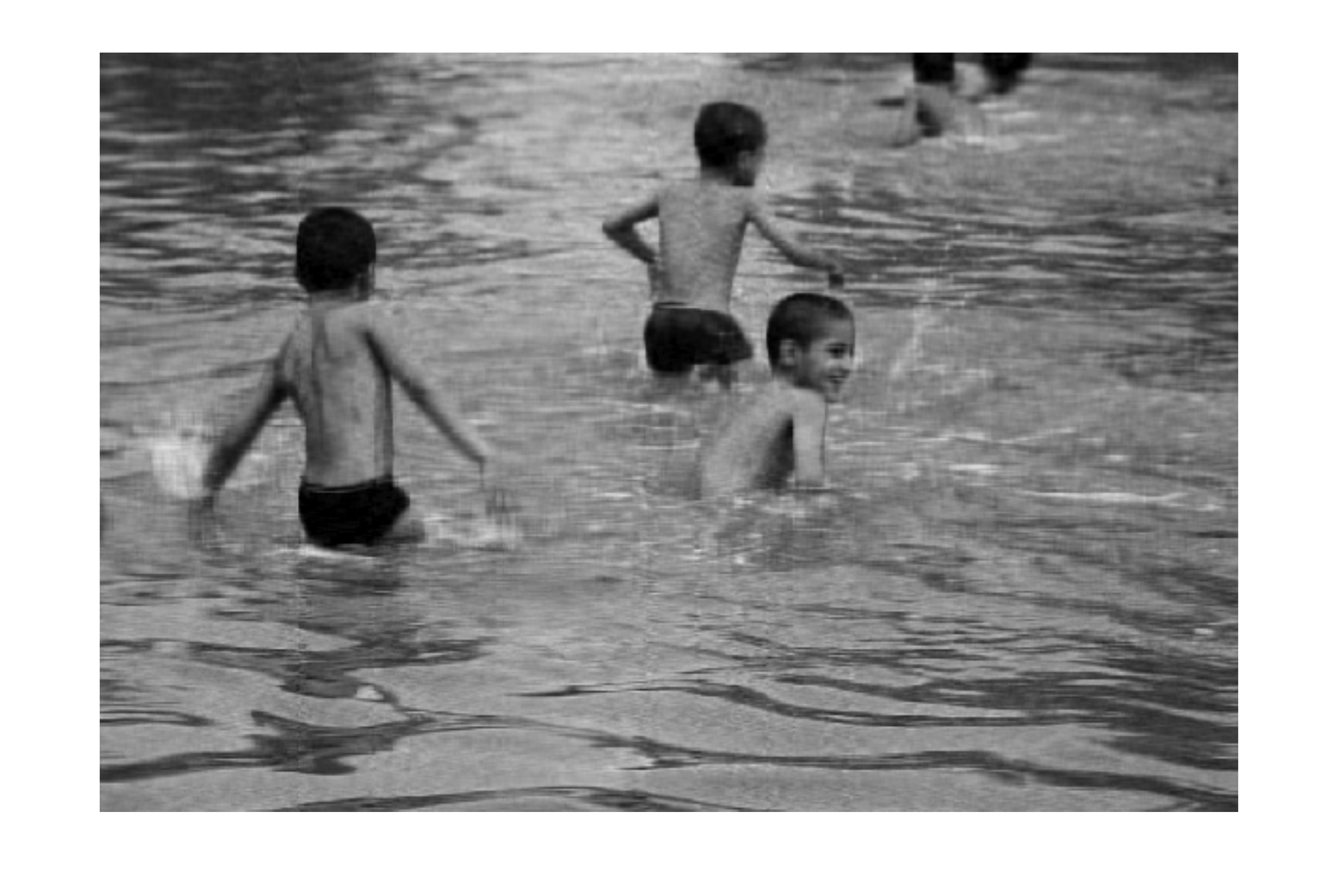} 
\includegraphics[width=0.30\textwidth, bb=0 0 700 700]{./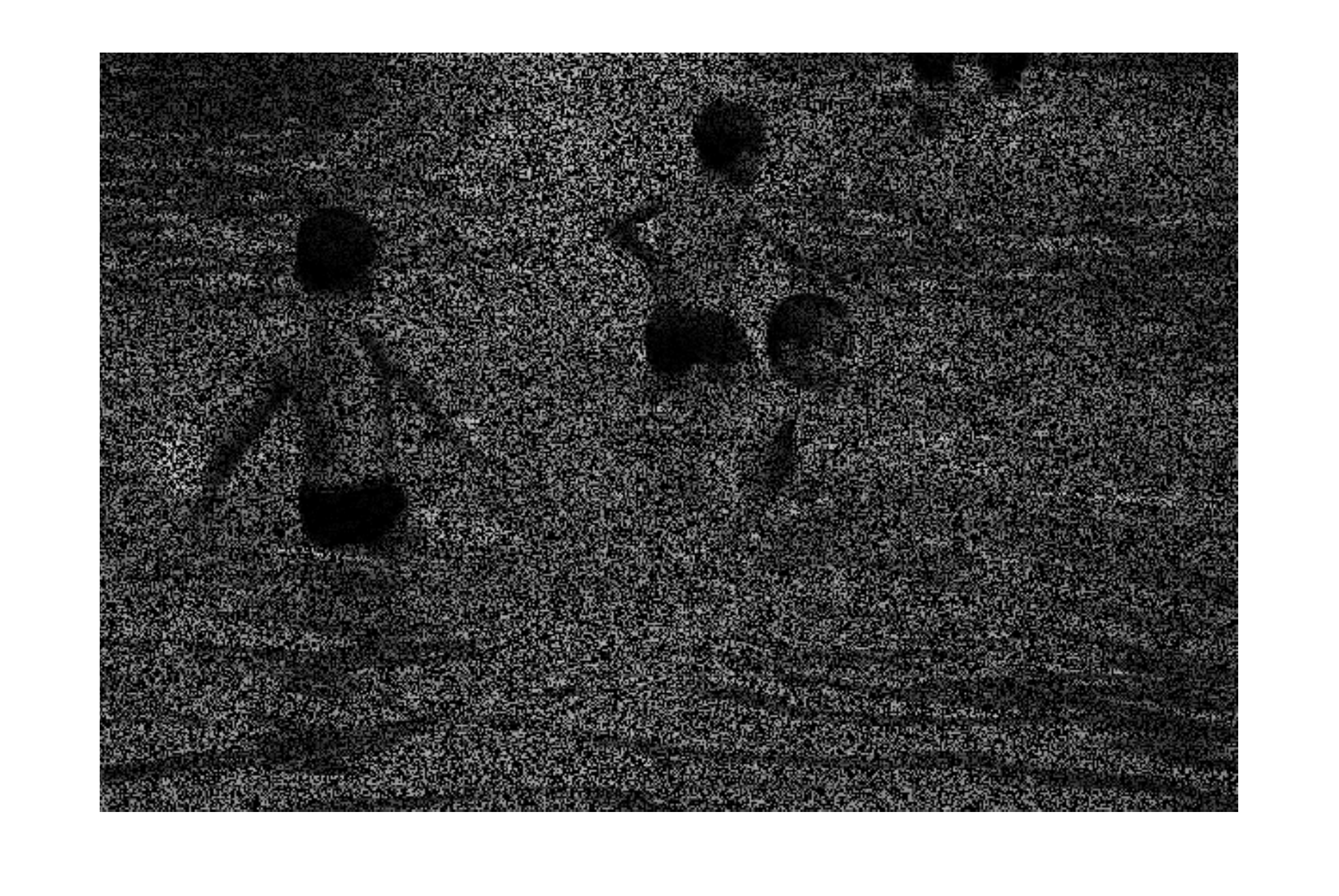} \\
\includegraphics[width=0.22\textwidth, bb=0 0 700 700]{./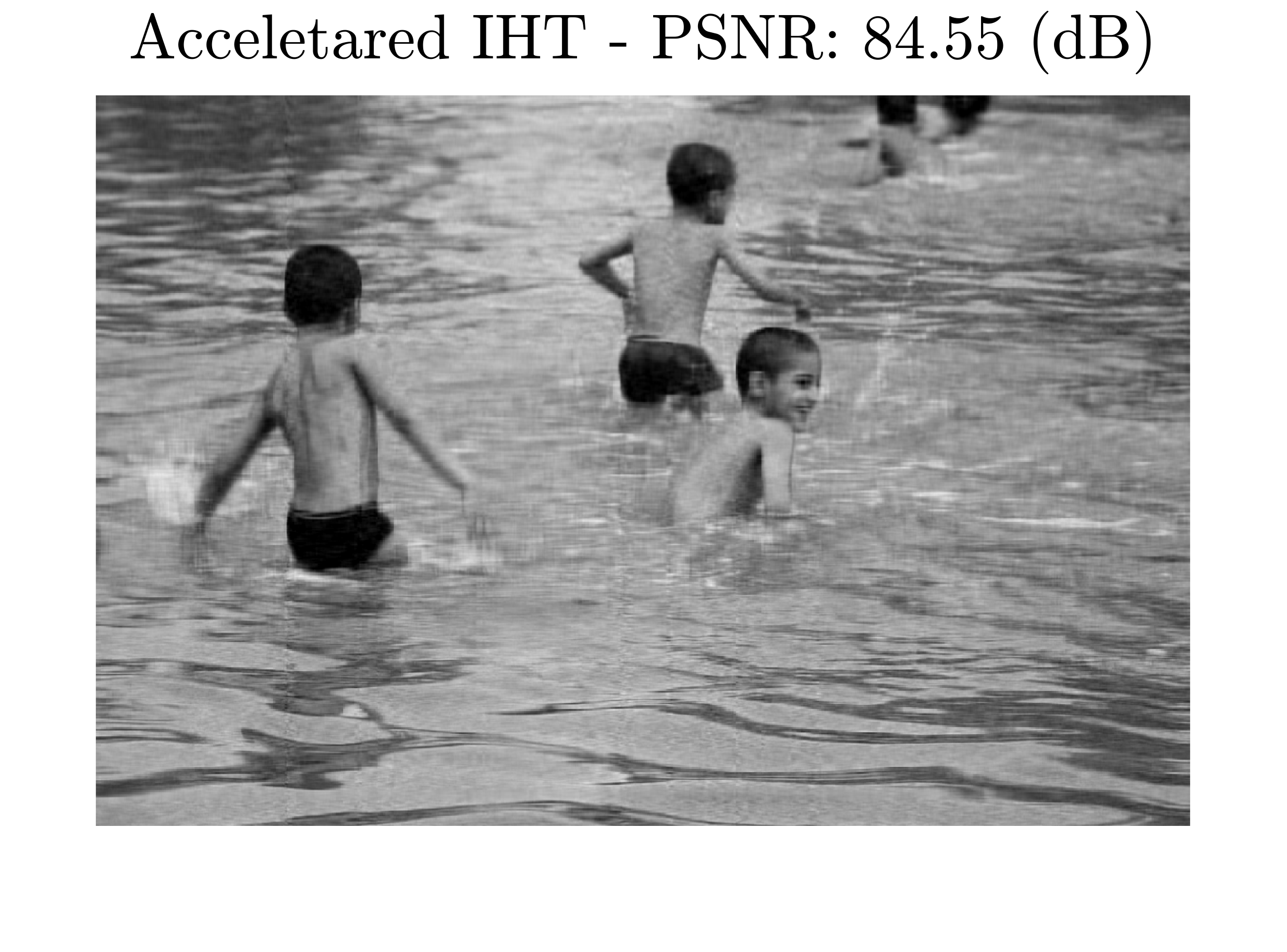} 
\includegraphics[width=0.22\textwidth, bb=0 0 700 700]{./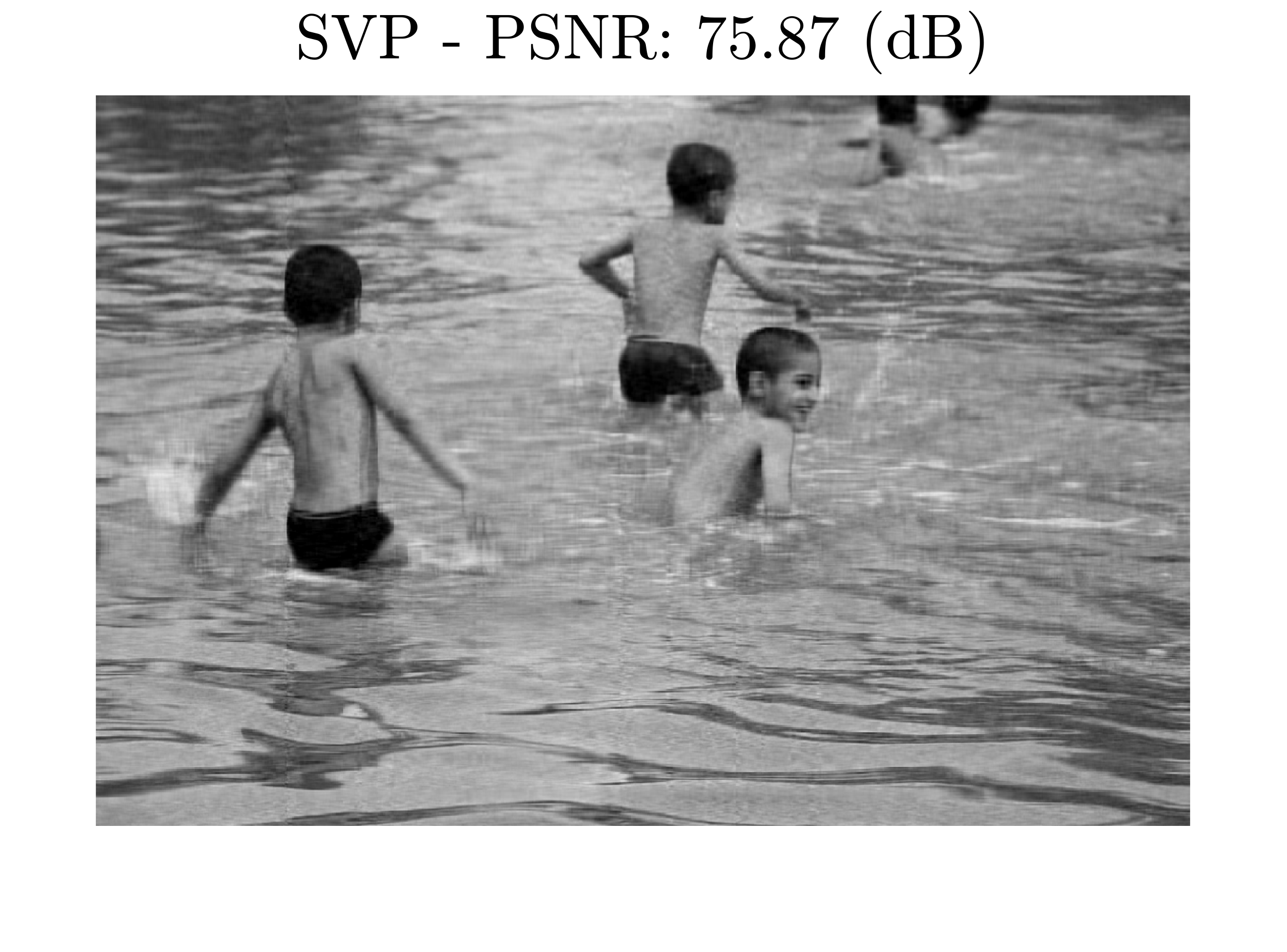} 
\includegraphics[width=0.22\textwidth, bb=0 0 700 700]{./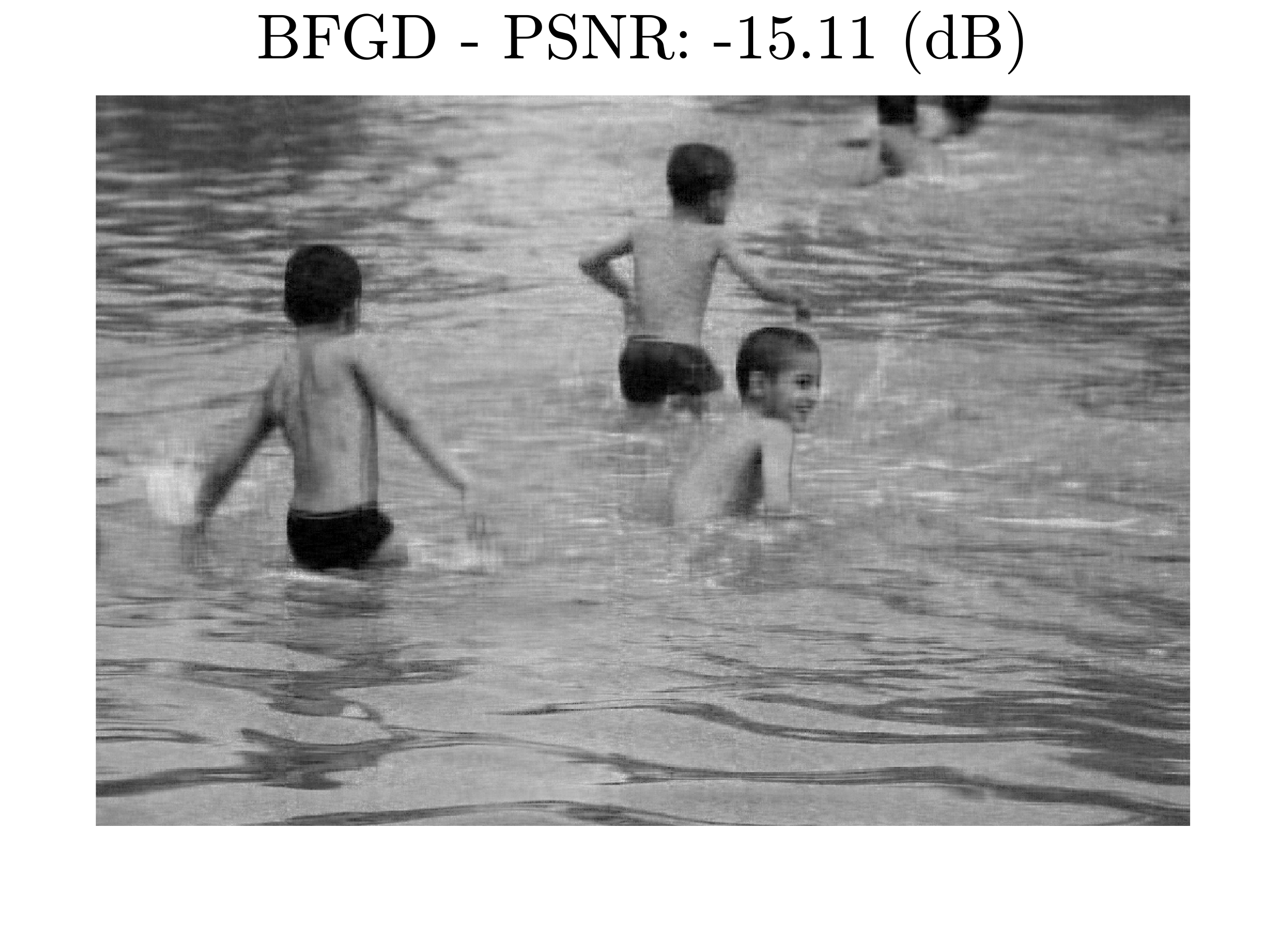} 
\includegraphics[width=0.22\textwidth, bb=0 0 700 700]{./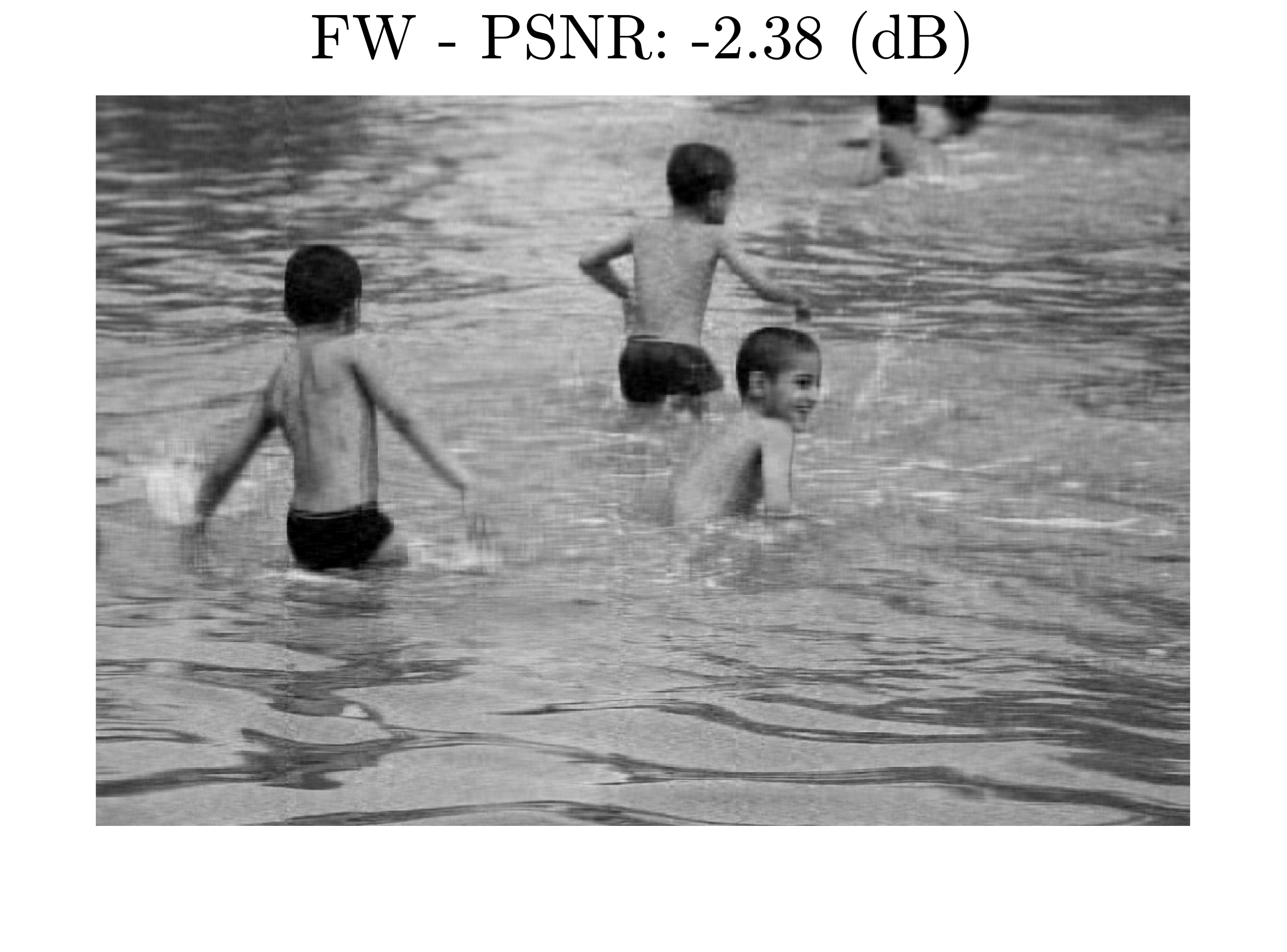}
\caption{Reconstruction performance in image denoising settings. The image size is $683 \times 1024$ ($699,392$ pixels) and the approximation rank is preset to $r = 60$. We observe 35\% of the pixels of the true image. \textbf{Top row:} Original, low rank approximation, and observed image. \textbf{Bottom row:} Reconstructed images. }
\label{fig:image2_MC}
\end{figure*}

\end{document}